\DeclareMathOperator*{\bigO}{\ensuremath{\mathcal{O}}}
\DeclareMathOperator*{\rank}{rank}
\DeclareMathOperator*{\minimize}{minimize}
\DeclareMathOperator*{\maximize}{maximize}
\newcommand{\ones}{\mathbf{1}}
\newcommand{\algName}{\mathbb{R}^m}
\newtheorem{thm}{Theorem}
\numberwithin{thm}{section}
\numberwithin{cor}{section}
\newtheorem{lem}{Lemma}
\numberwithin{lem}{section}
\newtheorem{prop}{Proposition}
\numberwithin{prop}{section}
\newtheorem{defn}{Definition}
\numberwithin{defn}{section}
\numberwithin{ex}{section}
\newtheorem{ass}{Assumption}
\newcommand*{\newton}{d}
\newcommand{\qup}{q}
\title{Log-domain interior-point methods for convex quadratic programming}
\author{Frank Permenter}
\begin{document}
\maketitle
\abstract{
  Applying an interior-point method  to the central-path conditions is a widely
  used approach for solving quadratic programs.  Reformulating these conditions
  in the log-domain is a natural variation on this approach that to our
  knowledge is previously unstudied.  In this paper, we analyze log-domain
  interior-point methods and prove their polynomial-time convergence.
  We also prove that they are approximated by classical barrier methods in a
  precise sense and provide simple computational experiments illustrating their
  superior performance.
}

\section{Introduction}
Interior-point methods (IPMs) are widely used numerical
algorithms for solving convex quadratic programs (QPs)
of the form
\begin{align}~\label{qp:main}
  \begin{aligned}
    \mbox{minimize } &\quad \frac{1}{2}x^T W x + c^{T}x\\
    \mbox{subject to} &\quad Ax + b \ge 0,
  \end{aligned}
\end{align}
where $x \in \mathbb{R}^n$ is the decision variable,
$A \in \mathbb{R}^{m \times n}$ and $b \in \mathbb{R}^m$ define linear inequality constraints, 
and $W \in \mathbb{R}^{n \times n}$ is a symmetric, positive semidefinite matrix 
that, together with $c \in \mathbb{R}^n$, defines a convex, quadratic objective.
IPMs solve~\eqref{qp:main} by tracking 
the solution $(x, s, \lambda) \in \mathbb{R}^n \times \mathbb{R}^{m} \times \mathbb{R}^m$
to the \emph{central-path} conditions
\begin{align}\label{eq:cp}
  A^{T} \lambda = Wx + c, \;\; s = Ax + b, \qquad 
  \lambda \ge 0, s \ge 0,  \;\;s_i \lambda_i = \mu \;\; \forall i \in \{1, 2,\ldots, m\}
\end{align}
for a decreasing sequence of $\mu > 0$.  When $\mu = 0$, these  are precisely the Karush-Kuhn-Tucker
(KKT) optimality conditions for~\eqref{qp:main}. Hence, 
by gradually reducing $\mu$ to zero, IPMs produce an optimal solution $x$ to~\eqref{qp:main}
along with an optimal constraint slack $s$ and corresponding vector $\lambda$
of Lagrange multipliers.  IPMs are efficient in practice
and have several high quality implementations~\cite{optimization2012gurobi,
mosek2010mosek}. They are also efficient in theory,
 requiring just $\bigO(\sqrt m)$ iterations to solve the QP~\eqref{qp:main} to fixed accuracy,
where the per-iteration cost  is the solution of an $n \times n$ linear
system~\cite{wright1997primal, terlaky2013interior, achache2006new}.  

Success of IPMs requires 
existence and uniqueness of the central path, i.e., of solutions $(x, s, \lambda)$
to~\eqref{eq:cp} for all $\mu > 0$. Using standard arguments (e.g.,~\cite[Theorem~1]{grana2000central}),
this holds by further assuming the QP~\eqref{qp:main} satisfies
the following conditions.
\begin{ass}\label{ass:main}
  The following conditions hold:
  \begin{itemize}
    \item There exist $x\in\mathbb{R}^n$ and $s\in\mathbb{R}^m$ with $s> 0$ satisfying $s = Ax + b$.
    \item For all $\beta \in \mathbb{R}$, the sublevel set $\{ x \in \mathbb{R}^n : \frac{1}{2}x^{T}Wx + c^{T}x \le \beta, Ax + b \ge 0 \}$ is bounded.
    \item $A^T A + W \succ 0$, i.e.,  $A^T A + W$ is positive definite. 
  \end{itemize}
\end{ass}
\noindent We assume that these conditions hold throughout.
Note that these conditions impose no direct constraint  on the shape of $A \in
\mathbb{R}^{m \times n}$, i.e., we may have that $m = n$, $m < n$, or $m>n$.
Further, if $W = 0$, then the condition $A^T A \succ 0$
is the usual assumption for linear programming that the constraint matrix of $Ax + b \ge 0$
is full column rank; see, e.g.,~\cite{andersen2003implementing}.


\subsection{Log-domain interior-point methods}~\label{sec:ipm_template}
The set of nonnegative $(s, \lambda)$ satisfying $s_i \lambda_i = \mu$ for $i \in \{1, 2, \ldots, m\}$
is easily parameterized in the log-domain:
letting $e^v \in \mathbb{R}^m$ denote elementwise exponentiation,
this condition holds if and only if $\lambda = \sqrt \mu e^{v}$ and $s = \sqrt \mu e^{-v}$
  for some $v\in \mathbb{R}^m$.
  This $v$-parametrization of $s$ and $\lambda$ yields the following
   log-domain reformulation of the central-path conditions~\eqref{eq:cp} 
\begin{align}~\label{eq:logcentral}
\sqrt{\mu} A^T   e^v = Wx + c, \;\; \sqrt{\mu} e^{-v}  =  Ax + b
\end{align}
and a template \emph{log-domain interior-point method} for solving the QP~\eqref{qp:main}:
  \begin{itemize}
    \item Update $(v, x)$ by applying Newton's method to~\eqref{eq:logcentral} for fixed $\mu$.
    \item Reduce $\mu$ and repeat.
  \end{itemize}
This paper studies this template algorithm, which, to our knowledge,
has not previously appeared in the QP literature. As we show, there exist 
concrete instantiations that are both practically and theoretically efficient.
In particular, we provide a \emph{short-step} algorithm and prove the typical
$\bigO(\sqrt m)$ iteration bound.  We also provide a \emph{long-step} version
and illustrate its practical performance.  

\subsection{Prior work}

The literature on quadratic programming is vast and we will not attempt to cite
it completely. We do note that IPMs with polynomial-time complexity trace to
\cite{karmarkar1984new, kapoor1986fast} and IPMs with $\bigO(\sqrt
m)$ iteration bounds include~\cite{monteiro1989interiorQuad, goldfarb1990n, goldfarb1993n, kojima1991sqrt}.  
One can also obtain a $\bigO(\sqrt m)$ bound by invoking the
\emph{self-concordance} of a suitable \emph{barrier function};
see~\cite{nesterov1994interior}.
For linear objectives ($W=0$), our algorithms are special cases of 
\emph{geodesic interior-point methods}~\cite{permenter2020geodesic},  recent
techniques for minimizing a \emph{linear} function  subject to \emph{symmetric
cone} inequalities. Indeed, our main analysis task is showing that key
convergence results of~\cite{permenter2020geodesic} still hold 
when a quadratic objective term $x^{T}Wx$ is included.

Linear updates of the log parameter $v$ 
are of course multiplicative updates
of $s = \sqrt\mu e^{-v}$  and $\lambda = \sqrt\mu e^{v}$.
Algorithms based on multiplicative updates
have been developed for restricted families of QPs, e.g.,
linear programs $(W = 0)$, nonnegative least-squares $(A = I, b = 0)$,
and model-predictive control; see, e.g.,~\cite{arora2012multiplicative, sha2007multiplicative, di2013multiplicative}. 
We emphasize that in each of these algorithms, 
the updates are distinct from ours and are designed in different ways. In
particular, they are only applied to one variable, $\lambda$ or $s$, and are
not based on the log-transformation~\eqref{eq:logcentral} of the central-path
conditions.





\subsection{Outline of contributions}
The contributions of this paper are organized as follows. \Cref{sec:Newton} analyzes the application
of Newton's method to the log-domain central-path equations~\eqref{eq:logcentral},
establishing a globally-convergent step-size rule and a local region of
quadratic convergence.  Building on this analysis, Section~\ref{sec:alg}
provides two algorithms  for solving the QP~\eqref{qp:main}
based on two different $\mu$-update rules. The first is a short-step
algorithm that reduces $\mu$ at a fixed rate and  terminates after at most
$\bigO(\sqrt m)$ Newton iterations.  The second is a long-step algorithm that
employs more aggressive $\mu$-updates via line-search.
Section~\ref{sec:barrier} provides theoretical and computational
comparisons with barrier methods. In particular, we show that these methods
are, in a precise sense, approximations of the presented log-domain IPMs. 

%
%

\section{Newton's method}~\label{sec:Newton}
Applying Newton's method to the log-domain central-path
equations~\eqref{eq:logcentral} proceeds by Taylor-approximating the
exponential functions $e^v$ and $e^{-v}$.
Letting $x \circ y$ denote elementwise multiplication of $x, y \in \mathbb{R}^m$,
these approximations take the form
\begin{align*}
  \begin{aligned}
    e^{v + d} &\approx  e^v +  e^v \circ d, \\
  \end{aligned}
  \qquad
  \begin{aligned}
    e^{-(v + d)} &\approx  e^{-v} - e^{-v}\circ  d.
\end{aligned}
\end{align*}
The Newton direction  $\newton(v, \mu)\in\mathbb{R}^m$ and an associated~$x(v,
\mu) \in \mathbb{R}^n$ are then defined by substituting
these approximations into~\eqref{eq:logcentral}.
\begin{defn}\label{defn:NewtonDirection}
  For fixed $\mu > 0$ and $v \in \mathbb{R}^m$,
  the \emph{Newton direction} $\newton(v, \mu)$ 
  and associated  $x(v, \mu)$ are the $d \in \mathbb{R}^m$ and $x \in \mathbb{R}^n$
  satisfying
  \begin{align*}
    \sqrt \mu   A^T(e^v +  e^v \circ d) =  Wx + c,   \;\;      \sqrt \mu   (e^{-v} -  e^{-v} \circ d)  = Ax + b.
  \end{align*}
\end{defn}
Note that when $d(v, \mu) = 0$,
we obtain a point  $(x, s, \lambda)$ on the central-path
by taking $x = x(v, \mu)$,  $s =\sqrt\mu e^{-v}$, and $\lambda= \sqrt\mu e^v$.
Further, $x(v, \mu)$ is a ``good'' approximate
solution of the QP~\eqref{qp:main} when $\mu$ and $\|d(v, \mu)\|$ are sufficiently
small. (Exact error bounds will be given in Section~\ref{sec:longstep}.)
It remains to prove that $\newton(v, \mu)$  and $x(v, \mu)$ are well-defined
for all $\mu >0$ and $v \in \mathbb{R}^m$.
For this, we next show that $\newton(v, \mu)$ is a function of $x(v, \mu)$
and that $x(v, \mu)$ is the unique solution of a consistent linear system.
In particular, we show that this linear system is of the form $Sx = f$ for $S \succ 0$, i.e.,
for $S$ symmetric and positive definite. 
\begin{thm}\label{thm:NewtonChar}
  For all $v \in \mathbb{R}^m$ and $\mu > 0$, the Newton direction $\newton(v, \mu)$ and point $x(v, \mu)$ satisfy
  \[
  \newton = \ones -  \frac{1}{\sqrt\mu} e^v \circ (Ax + b),
  \]
where $\ones \in \mathbb{R}^m$ denotes the vector of all ones. 
  Moreover, $x(v, \mu)$ is the unique solution of
  \[ 
  (A^T Q(v) A +  W)x = 2\sqrt \mu  A^{T} e^v  -(c+ A^{T}Q(v)b),
  \]
where $Q(v) \in \mathbb{R}^{m \times m}$ is the diagonal matrix with $[Q(v)]_{ii} = e^{2v_i}$. 
Further, $A^T Q(v) A +  W \succ 0$.

  \begin{proof}
  Rearranging $\sqrt \mu   (e^{-v} -  e^{-v} \circ d)  = Ax + b$, we conclude that
\begin{align*}
  d &=  \ones -  \frac{1}{\sqrt\mu} e^v \circ (Ax + b).
\end{align*}
Substituting into $\sqrt \mu   A^T(e^v +  e^v \circ d) =  Wx + c$ yields
\begin{align*}
  Wx + c &= \sqrt \mu A^{T} e^v \circ (\ones+d) \\
   &=\sqrt \mu  A^{T} e^v \circ ( \ones + \ones - \frac{1}{\sqrt\mu} e^v \circ ( Ax + b)).
\end{align*}
Rearranging and using $Q(v)Ax = e^v \circ (e^v \circ Ax)$ shows that
\[
  (A^T Q(v)  A +  W) x  =     2\sqrt \mu  A^{T} e^v  -(c+ A^{T}Q(v)b).
\]
 Uniqueness of $x$ follows because  $A^T Q(v)  A +  W$ is 
    positive definite under our assumption that $A^{T}A + W$ is positive definite (\Cref{ass:main}).
    To see this, suppose that $(A^T Q(v)  A +  W) z = 0$ for nonzero $z$.
    Then, $Wz = 0$ and  $A^T Q(v)  A  z = 0$.  But this implies that $Q(v)^{1/2}  A  z = 0$, which,
    in turn means that $Az = 0$ since $Q(v)^{1/2}$ is invertible. We conclude that $(A^T A + W)z = 0$,
    a contradiction.
  \end{proof}
\end{thm}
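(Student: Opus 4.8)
The plan is to eliminate $d$ from the two equations defining $\newton(v,\mu)$ and $x(v,\mu)$ in \Cref{defn:NewtonDirection}, thereby reducing everything to a single linear system in $x$, and then to establish positive definiteness of that system's matrix, which simultaneously yields uniqueness and the connection to \Cref{ass:main}.

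First I would solve for $d$ using the second defining equation. Writing $\sqrt\mu(e^{-v} - e^{-v}\circ d) = Ax + b$ as $e^{-v}\circ(\ones - d) = \tfrac{1}{\sqrt\mu}(Ax+b)$ and multiplying both sides elementwise by $e^v$ (valid since $e^v \circ e^{-v} = \ones$) gives $\ones - d = \tfrac{1}{\sqrt\mu}\, e^v\circ(Ax+b)$, i.e.\ the claimed formula $\newton = \ones - \tfrac{1}{\sqrt\mu}\, e^v\circ(Ax+b)$.

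Next I would substitute this into the first defining equation. Since $e^v\circ(\ones + d) = 2e^v - \tfrac{1}{\sqrt\mu}\,e^v\circ e^v\circ(Ax+b) = 2e^v - \tfrac{1}{\sqrt\mu}\,Q(v)(Ax+b)$, multiplying by $\sqrt\mu\,A^T$ and using $\sqrt\mu A^T(e^v + e^v\circ d) = Wx+c$ gives $2\sqrt\mu\,A^T e^v - A^T Q(v)(Ax+b) = Wx + c$. Expanding $A^TQ(v)(Ax+b) = A^TQ(v)Ax + A^TQ(v)b$ and collecting the $x$-terms produces the asserted system $(A^TQ(v)A + W)x = 2\sqrt\mu\,A^Te^v - (c + A^TQ(v)b)$. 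Because every step above is reversible, a pair $(\newton,x)$ satisfying \Cref{defn:NewtonDirection} exists (resp.\ is unique) exactly when this linear system has a solution (resp.\ a unique solution).

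The substantive step is then to show $A^TQ(v)A + W \succ 0$. Here I would observe that $W \succeq 0$ by hypothesis and $A^TQ(v)A \succeq 0$ because $Q(v)$ is diagonal with strictly positive entries $e^{2v_i}$, so the sum is positive semidefinite; if $z^T(A^TQ(v)A + W)z = 0$ then both terms vanish, forcing $Wz = 0$ and $\lVert Q(v)^{1/2}Az\rVert^2 = 0$. Since $Q(v)^{1/2}$ is an invertible diagonal matrix, $Q(v)^{1/2}Az = 0$ forces $Az = 0$, hence $A^TAz = 0$ and $(A^TA + W)z = 0$; but $A^TA + W \succ 0$ by \Cref{ass:main}, so $z = 0$. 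I do not expect a genuine obstacle in this argument — the one point requiring care is to pass through invertibility of $Q(v)^{1/2}$ rather than of $Q(v)A$ (which need not even be square), after which the conclusion follows from elementary linear algebra.
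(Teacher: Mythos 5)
Your proposal is correct and follows essentially the same route as the paper: solve the second defining equation for $d$, substitute into the first to obtain the linear system $(A^TQ(v)A+W)x = 2\sqrt\mu\,A^Te^v - (c+A^TQ(v)b)$, and prove positive definiteness by showing a null vector $z$ would satisfy $Wz=0$ and $Q(v)^{1/2}Az=0$, hence $(A^TA+W)z=0$, contradicting \Cref{ass:main}. The only (harmless) difference is that you phrase the definiteness step via the vanishing quadratic form and note explicitly that the eliminations are reversible, which the paper leaves implicit.
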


The remainder of this section studies convergence of the Newton iterations
\[
  v_{i+1} = v_i + \frac{1}{\alpha_i} d(v_i, \mu)
\]
under a simple step-size rule  for choosing  $\alpha_i \in \mathbb{R}$.
We will show global convergence to \emph{centered points}.
\begin{defn}[Centered points]
  For $\mu > 0$, the \emph{centered point} $\hat v(\mu)$ is the $v\in\mathbb{R}^m$ that, for some $x\in\mathbb{R}^n$, solves the log-domain
central-path equations~\eqref{eq:logcentral}.
\end{defn}
\noindent Following~\cite{permenter2020geodesic}, we will measure the distance
of an iterate $v_i$ to $\hat v(\mu)$ using \emph{divergence}.
\begin{defn}[Divergence~\cite{permenter2020geodesic}]
  The \emph{divergence} $h(u, v)$ of $(u, v) \in \mathbb{R}^m \times \mathbb{R}^m$
  is
\[
    h(u, v) := \langle e^{u}, e^{-v}\rangle  + \langle e^{-u}, e^{v}\rangle - 2m.
\]
  For fixed $\mu > 0$, the function $h_{\mu} : \mathbb{R}^m \rightarrow \mathbb{R}$ 
  denotes the map $v \mapsto h(\hat v(\mu), v)$. 
\end{defn}
\noindent While divergence is \emph{not} a metric,
it does have a set of properties useful for convergence analysis.
\begin{lem}\label{lem:divprop}
  The following properties hold for all $u, v \in \mathbb{R}^m$ and $\mu > 0$.
  \begin{enumerate}[label= (\alph*)]
    \item  $h(u, v) = h(v, u)$ and $h(u, v) \ge 0$.
    \item\label{item:divposdef}  $h(u, v) = 0$ if and only if $u = v$. In particular,
      $h(u, v) = -2m + \sum^m_{i=1} 2\cosh(v_i - u_i)$.
    \item\label{item:divstrongconvex}$h_{\mu} : \mathbb{R}^m \rightarrow \mathbb{R}$ is strongly convex. In particular, $\frac{1}{2}\nabla^2 h_{\mu}(v) \succeq I$.
  \end{enumerate}
\end{lem}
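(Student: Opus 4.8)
The plan is to prove the three parts essentially directly from the definition, reducing everything to the scalar identity relating $\langle e^u, e^{-v}\rangle + \langle e^{-u}, e^v\rangle$ to a sum of hyperbolic cosines. The key algebraic observation is that
\[
  \langle e^u, e^{-v}\rangle + \langle e^{-u}, e^v\rangle
  = \sum_{i=1}^m \bigl( e^{u_i - v_i} + e^{-(u_i - v_i)} \bigr)
  = \sum_{i=1}^m 2\cosh(u_i - v_i),
\]
so that $h(u,v) = -2m + \sum_{i=1}^m 2\cosh(v_i - u_i)$ (using that $\cosh$ is even). This single identity delivers most of the lemma.

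For part (a), symmetry $h(u,v) = h(v,u)$ is immediate either from swapping the two inner products in the definition or from evenness of $\cosh$ in the rewritten form. Nonnegativity follows because $\cosh(t) \ge 1$ for all $t \in \mathbb{R}$, hence $\sum_i 2\cosh(v_i - u_i) \ge 2m$. For part (b), the rewritten formula is exactly the displayed identity, and since $\cosh(t) = 1$ if and only if $t = 0$, we get $h(u,v) = 0$ iff $v_i = u_i$ for every $i$, i.e. $u = v$. For part (c), I would first note that $\hat v(\mu)$ is a fixed vector (it exists and is unique — existence follows from Assumption \ref{ass:main} and the discussion preceding the lemma, since $d(\hat v(\mu),\mu) = 0$ characterizes a central-path point, and uniqueness of the associated $x$ plus the bijection between $v$ and $(s,\lambda)$ pins down $\hat v(\mu)$), so $h_\mu(v) = -2m + \sum_{i=1}^m 2\cosh(v_i - \hat v(\mu)_i)$ is a separable sum of shifted copies of $2\cosh$. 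Then $\nabla^2 h_\mu(v)$ is the diagonal matrix with entries $2\cosh(v_i - \hat v(\mu)_i) \ge 2$, so $\tfrac12 \nabla^2 h_\mu(v) \succeq I$, which is strong convexity with modulus $1$.

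I do not anticipate a real obstacle here; the lemma is a packaging of elementary facts about $\cosh$. The only point requiring a word of care is the implicit claim in part (c) that $\hat v(\mu)$ is well-defined (a single point rather than a set), since $h_\mu$ is only meaningful if $\hat v(\mu)$ is; but this is supplied by the uniqueness established in Theorem \ref{thm:NewtonChar} together with the one-to-one log-parametrization $\lambda = \sqrt\mu\, e^v$, $s = \sqrt\mu\, e^{-v}$ of the diagonal complementarity conditions. Everything else is a two-line computation.
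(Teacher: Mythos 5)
Your proof is correct, and since the paper states this lemma without proof (the properties are elementary and inherited from the divergence of \cite{permenter2020geodesic}), your separable $\cosh$ identity and the resulting diagonal Hessian computation are exactly the intended argument. One small correction on the side remark: the well-definedness of $\hat v(\mu)$ is not really supplied by Theorem~\ref{thm:NewtonChar} (which gives uniqueness of $d(v,\mu)$ and $x(v,\mu)$ for a \emph{fixed} $v$, not uniqueness of the centered $v$); it comes from the existence and uniqueness of the central path assumed under Assumption~\ref{ass:main} in the introduction, combined with the bijection $v \mapsto (\sqrt\mu e^{-v}, \sqrt\mu e^{v})$. This does not affect the validity of parts (a)--(c).
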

\noindent Leveraging these properties, Section~\ref{sec:stepsize} 
provides a step-size rule for which $h_{\mu}(v_{i}) < h_{\mu}(v_{i-1})$ for all iterations $i$.
Building on this, Section~\ref{sec:globalconv}
shows that the sequence $v_0, v_1, v_2,\ldots$ converges to the centered point $\hat v(\mu)$ 
from an arbitrary initial $v_0 \in \mathbb{R}^m$.
Finally, Section~\ref{sec:quadconv} shows
quadratic convergence  when $h_{\mu}(v_0) \le \frac{1}{2}$.
As we will point out, some statements generalize previous
results for linear optimization~\cite{permenter2020geodesic} 
to the quadratic program~\eqref{qp:main}.

\subsection{Step-size rule}~\label{sec:stepsize}
Our step-size rule arises from bounds on the directional
derivatives of divergence $h_{\mu}(v)$.
Towards stating them, fix $v \in \algName$ and  $\mu > 0$
and for brevity let $d\in\mathbb{R}^m$ denote the Newton direction $\newton(v, \mu)$.
Assume that $d \ne 0$ or, equivalently, that $v \ne \hat v(\mu)$.
Finally, let $f : \mathbb{R} \rightarrow \mathbb{R}$ 
denote the restriction of $h_{\mu}(v)$ to the line induced by $v$ and $d$, i.e.,
\[
  f(t) := h_{\mu}(v + t d).
\]
The next lemma provides bounds on $f'(0)$ and $f''(t)$ and
generalizes~\cite[Lemma
3.4]{permenter2020geodesic}~and~\cite[Lemma 3.6]{permenter2020geodesic}.  
\begin{lem}\label{lem:NewtonDirDerivs}
The following statements hold.
  \begin{itemize}
    \item $f{'}(0) \le  -(f(0) + \|d\|^2)$.
    \item $f^{''}(t) \le \|d\|^{2}_{\infty} f(t) + 2 \|d\|^2$.
    \item For all intervals $[a, b]\subset \mathbb{R}$, we have $\sup_{\zeta \in [a,b]} f''(\zeta) \le  \max_{\zeta \in \{a, b\}} (\|d\|^{2}_{\infty} f(\zeta) + 2 \|d\|^2)$.
  \end{itemize}
  \begin{proof}
    For brevity, let $w = e^v$, $\hat w = e^{\hat v(\mu)}$ and $k = \sqrt \mu$.
Letting $z^{-1}$ denote elementwise inversion,
    define $p := \langle w^{-1} \circ (\ones- d) -  \hat w^{-1},   w \circ(\ones+d)- \hat w \rangle$.
Expanding, we conclude that
\begin{align*}
  p &= \langle (\ones -d), (\ones +d) \rangle -\langle w^{-1} \circ \hat w, (\ones -d) \rangle -\langle w \circ\hat w^{-1}, (\ones +d) \rangle
+m \\
  &=m - \|d\|^2  -\langle w^{-1} \circ \hat w, (\ones -d) \rangle -\langle w \circ \hat w^{-1}, (\ones +d) \rangle + m\\
  &= -( \langle  \hat w, w^{-1} \rangle + \langle  \hat w^{-1}, w \rangle - 2m)   - \|d\|^2  -
  \langle w \circ \hat w^{-1} - w^{-1} \circ \hat w, d \rangle
\\
  &= -f(0) - \|d\|^2 - f'(0).
\end{align*}
We now show that $p \ge 0$ for the Newton direction $d$, which will prove the first statement.  Since $\hat w$ solves~\eqref{eq:cp}, we have for some $\hat x$, that
  $b = k \hat w^{-1} - A\hat x$ and   $c = kA^{T}\hat w  - W\hat x$.
Substituting these expressions for $b$ and $c$ into the definition of $d$, we have,
for some $x$, that
    \[
      k w^{-1} \circ (\ones- d) - k \hat w^{-1}   = A(x - \hat x) , \;\;
      k A^T (w \circ (\ones+ d) -  \hat w) = W(x - \hat x).
    \]
Using the first equation, we  conclude that $p = \langle \frac{1}{k}A(x-\hat
    x),  w \circ(\ones+d)- \hat w  \rangle$.  Combining with the second yields
    $p  = \langle \frac{1}{k}(x - \hat x), \frac{1}{k} W(x-\hat x) \rangle \ge
    0$, which proves the first statement.  Proof of the second statement is identical to~\cite[Lemma
    3.4.c]{permenter2020geodesic}.  The third statement follows from the second
    and convexity of $f(t)$.
  \end{proof}
\end{lem}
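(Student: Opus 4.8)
The plan is to reduce all three bounds to elementary facts about the scalar map $f(t) = h_\mu(v + td)$ once it is written out coordinatewise. By the closed form for divergence in \Cref{lem:divprop}, writing $\hat v := \hat v(\mu)$ and $a_i := v_i - \hat v_i$, we have $f(t) = -2m + \sum_{i=1}^m 2\cosh(a_i + t d_i)$, hence $f'(t) = \sum_i 2 d_i \sinh(a_i + t d_i)$ and $f''(t) = \sum_i 2 d_i^2 \cosh(a_i + t d_i)$. The second bound is then immediate from the identity $\cosh x = 1 + 2\sinh^2(x/2)$, which rewrites $f(t) = 4\sum_i \sinh^2\!\big(\tfrac{a_i + t d_i}{2}\big)$ and $f''(t) = 2\norm{d}^2 + 4\sum_i d_i^2 \sinh^2\!\big(\tfrac{a_i + t d_i}{2}\big)$; bounding $d_i^2 \le \norm{d}_\infty^2$ termwise gives $f''(t) \le \norm{d}_\infty^2 f(t) + 2\norm{d}^2$. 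For the third bound I would use that $f$ is convex (\Cref{lem:divprop}), so $\zeta \mapsto \norm{d}_\infty^2 f(\zeta) + 2\norm{d}^2$ is convex and, by the second bound, dominates $f''$ pointwise; its supremum over $[a,b]$ is attained at an endpoint and therefore bounds $\sup_{[a,b]} f''$, which is exactly the asserted inequality.

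The first bound is the real content and the only place the hypothesis $W \succeq 0$ enters. The idea is to recognize the quantity $f(0) + f'(0) + \norm{d}^2$ as the negative of an inner product of the two ``residuals'' of the Newton equations of \Cref{defn:NewtonDirection}, taken relative to the centered point:
\[
  r_1 := e^{-v}\circ(\ones - d) - e^{-\hat v}, \qquad r_2 := e^{v}\circ(\ones + d) - e^{\hat v}.
\]
I would first check, by expanding the four terms of $\ip{r_1}{r_2}$, the identity $\ip{r_1}{r_2} = -\big(f(0) + \norm{d}^2 + f'(0)\big)$: the ``$d$-parts'' of $r_1$ and $r_2$ pair to $m - \norm{d}^2$, the vectors $e^{-\hat v}$ and $e^{\hat v}$ pair to $m$, and the two cross terms reassemble into $\ip{e^{\hat v}}{e^{-v}} + \ip{e^{-\hat v}}{e^{v}} = f(0) + 2m$ plus an inner product against $d$ whose $i$-th entry is $e^{a_i} - e^{-a_i} = 2\sinh(a_i)$, i.e.\ exactly $f'(0)$.

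With this identity it remains to show $\ip{r_1}{r_2} \ge 0$. Subtracting the centered-point equations $\sqrt\mu A^T e^{\hat v} = W\hat x + c$ and $\sqrt\mu e^{-\hat v} = A\hat x + b$ from the two equations defining $d(v,\mu)$ and $x(v,\mu)$ gives $\sqrt\mu\, r_1 = A(x - \hat x)$ and $\sqrt\mu\, A^{T} r_2 = W(x - \hat x)$, so $\ip{r_1}{r_2} = \tfrac1\mu\,\ip{x - \hat x}{W(x-\hat x)} \ge 0$ since $W \succeq 0$. Together with the identity this gives $f'(0) = -(f(0)+\norm{d}^2) - \ip{r_1}{r_2} \le -(f(0)+\norm{d}^2)$. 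I expect the only obstacle to be mechanical, namely keeping the $e^{v}$-versus-$e^{\hat v}$ bookkeeping straight through the expansion of $\ip{r_1}{r_2}$; the one genuine idea is that the Newton system collapses this residual inner product into a $W$-weighted square of $x - \hat x$, and in the linear case $W = 0$ it vanishes, so the first bound then holds with equality and reproduces the corresponding result for linear optimization.
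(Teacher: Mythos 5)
Your proposal is correct and follows essentially the same route as the paper: your $\langle r_1, r_2\rangle$ is exactly the paper's quantity $p$, expanded to the same identity $-\bigl(f(0)+\|d\|^2+f'(0)\bigr)$ and shown nonnegative by the same subtraction of the centered-point equations from the Newton system, yielding $\tfrac1\mu (x-\hat x)^T W (x-\hat x) \ge 0$. The only difference is that you prove the second bullet explicitly via the $\cosh$/$\sinh^2$ identity (the paper cites the analogous lemma from the geodesic IPM paper), and your convexity argument for the third bullet matches the paper's.
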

Combining this lemma with the inequality 
\begin{align}~\label{eq:taylorup}
f(t) \le f(0) + f'(0) t + \frac{1}{2} \sup_{\zeta \in [0, t]} f''(\zeta) t^2
\end{align}
yields a piecewise step-size rule for selecting $t$ such that $f(t) < f(0)$. This rule
is parameterized by $0 < \beta < 1$ which, along with 
$\|d\|_{\infty}^2$, controls the transition from full to damped steps. 
\begin{thm}~\label{thm:newtonstep}
  For $\beta \in (0, 1)$,  let $\alpha  = \max(1, \frac{1}{2\beta} \|d\|_{\infty}^2)$. 
  The following statements hold.
  \begin{enumerate}[label= (\alph*)]
    \item~\label{item:thm:newton:strict}$f(\frac{1}{\alpha}) < f(0)$
    \item~\label{item:thm:newton:full}If $\alpha=1$, then $f(1) \le \frac{1}{2} \|d\|^2_{\infty} f(0) \le \beta f(0)$.
  \end{enumerate} 
  \begin{proof}
    Let $\hat t \ge 0$  denote the smallest $t$ for which $f(t) = f(0)$. By
    strong convexity (\Cref{lem:divprop}), we have that $f(t) < f(0)$ for all $t \in (0, \hat t)$
    since $d$ is a descent direction (\Cref{lem:NewtonDirDerivs}). Further $\hat t > 0$.
    Towards bounding $\hat t$, we first note that the combination of~\eqref{eq:taylorup} with Lemma~\ref{lem:NewtonDirDerivs}
    implies that for all $t$,
    \begin{align}\label{eq:newtonProof}
      f(t) \le f(0) - t(f(0) + \|d\|^2) + \frac{1}{2}(\|d\|^2_{\infty} \max(f(0), f(t)) + 2\|d\|^2) t^2.
    \end{align}
    Substituting $t = \hat t$ and using $f(0) = f(\hat t)$, we conclude that
    \[
      \hat t (\frac{\|d\|^2_{\infty}}{2} f(0) + \|d\|^2 ) \ge f(0) + \|d\|^2.
    \]
    Hence, $t < \hat t$ if $t (\frac{\|d\|^2_{\infty}}{2} f(0) + \|d\|^2 ) < f(0) + \|d\|^2$,
    which holds if $t  = \min(1, \frac{2\beta}{\|d\|_{\infty}^2})$,
    proving~\cref{item:thm:newton:strict}.
    \Cref{item:thm:newton:full} follows by substituting $t = 1$ 
    and $\max(f(0), f(1))  = f(0)$  into~\eqref{eq:newtonProof}.
  \end{proof}
\end{thm}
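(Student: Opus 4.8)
The plan is to combine the three derivative bounds of \Cref{lem:NewtonDirDerivs}, the strong convexity of $h_\mu$ from \Cref{lem:divprop}, and the Taylor-type upper model~\eqref{eq:taylorup} in order to control how far along the Newton direction $f$ can travel before its value returns to $f(0)$. As a preliminary remark, since $d \ne 0$, i.e., $v \ne \hat v(\mu)$, \Cref{lem:divprop} gives $f(0) = h_\mu(v) > 0$, while \Cref{lem:NewtonDirDerivs} gives $f'(0) \le -(f(0) + \|d\|^2) < 0$, so $d$ is a strict descent direction for $f$. Strong convexity of $h_\mu$ makes $f$ strongly convex in $t$, hence coercive, so there is a smallest $\hat t > 0$ with $f(\hat t) = f(0)$, and $f(t) < f(0)$ exactly on $(0, \hat t)$. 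Thus \cref{item:thm:newton:strict} reduces to establishing the strict inequality $1/\alpha < \hat t$.

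To lower-bound $\hat t$, I would evaluate~\eqref{eq:taylorup} at $t = \hat t$. Over $[0, \hat t]$, the third bullet of \Cref{lem:NewtonDirDerivs} bounds $\sup_{\zeta \in [0,\hat t]} f''(\zeta)$ by $\max\!\bigl(\|d\|_\infty^2 f(0) + 2\|d\|^2,\ \|d\|_\infty^2 f(\hat t) + 2\|d\|^2\bigr)$, which is just $\|d\|_\infty^2 f(0) + 2\|d\|^2$ because $f(\hat t) = f(0)$. Substituting, using $f(\hat t) = f(0)$ on the left-hand side, cancelling a factor $\hat t > 0$, and invoking $-f'(0) \ge f(0) + \|d\|^2$ yields
\[
  f(0) + \|d\|^2 \;\le\; \hat t\,\Bigl(\tfrac12\|d\|_\infty^2 f(0) + \|d\|^2\Bigr).
\]
It then remains to check that $t = 1/\alpha = \min\!\bigl(1,\ \tfrac{2\beta}{\|d\|_\infty^2}\bigr)$ satisfies the reverse strict inequality: from $t \le \tfrac{2\beta}{\|d\|_\infty^2}$ we get $\tfrac12\|d\|_\infty^2 f(0)\,t \le \beta f(0) < f(0)$ (using $0 < \beta < 1$ and $f(0) > 0$), and from $t \le 1$ we get $\|d\|^2 t \le \|d\|^2$; adding the two gives $t\bigl(\tfrac12\|d\|_\infty^2 f(0) + \|d\|^2\bigr) < f(0) + \|d\|^2$, hence $t < \hat t$ and $f(1/\alpha) < f(0)$, which is \cref{item:thm:newton:strict}.

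For \cref{item:thm:newton:full}, observe first that $\alpha = 1$ is equivalent to $\|d\|_\infty^2 \le 2\beta$, which is exactly $\tfrac12\|d\|_\infty^2 f(0) \le \beta f(0)$ after multiplying through by $f(0) \ge 0$. For the remaining inequality, since \cref{item:thm:newton:strict} already gives $f(1) < f(0)$ (as $1/\alpha = 1$ here), the third bullet of \Cref{lem:NewtonDirDerivs} now gives $\sup_{\zeta \in [0,1]} f''(\zeta) \le \|d\|_\infty^2 f(0) + 2\|d\|^2$; substituting this together with $f'(0) \le -(f(0) + \|d\|^2)$ into~\eqref{eq:taylorup} at $t = 1$ collapses to $f(1) \le \tfrac12\|d\|_\infty^2 f(0)$.

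The only genuine subtlety I foresee is the self-referential appearance of $f$ in the $f''$ bound of \Cref{lem:NewtonDirDerivs}; it is sidestepped by defining $\hat t$ as a first return time to $f(0)$ in the proof of \cref{item:thm:newton:strict}, and, for \cref{item:thm:newton:full}, by appealing to \cref{item:thm:newton:strict} so that the relevant maximum is again attained at $f(0)$. The remaining routine points are verifying that $\hat t$ is finite --- coercivity of a strongly convex function --- and that $f(0)$ is strictly positive, which is what keeps the $\beta$-inequalities strict.
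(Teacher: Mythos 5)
Your proposal is correct and follows essentially the same route as the paper: the same first-return time $\hat t$, the same combination of~\eqref{eq:taylorup} with \Cref{lem:NewtonDirDerivs} to get $\hat t\bigl(\tfrac12\|d\|_\infty^2 f(0)+\|d\|^2\bigr)\ge f(0)+\|d\|^2$, the same verification that $t=\min(1,2\beta/\|d\|_\infty^2)$ violates this bound strictly, and the same substitution $t=1$, $\max(f(0),f(1))=f(0)$ for part (b). Your added justifications (coercivity of $f$ for finiteness of $\hat t$, $f(0)>0$ for strictness, and invoking part (a) to resolve the $\max$ in part (b)) simply make explicit what the paper leaves implicit.
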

%

\subsection{Global convergence}~\label{sec:globalconv}
Newton iterations strictly decrease the divergence $h_{\mu}(v)$ under the
step-size rule of~\Cref{thm:newtonstep}.  Combined with  the strong convexity
of $h_{\mu}$, this implies  convergence to the centered point $\hat v(\mu)$.
\begin{thm}\label{thm:globalconv}
  Fix $0 < \beta < 1$ and $\mu > 0$.  For all $v_0 \in \mathbb{R}^m$, 
the Newton iterations $v_{i+1} = v_{i} + \frac{1}{\alpha_i} d(v_i, \mu)$ with step-size rule $\alpha_i =\max\{1, \frac{1}{2\beta}\|d(v_i, \mu)\|^2_{\infty}\}$
  converge to the centered point $\hat v(\mu)$. 
  \begin{proof}
    By choice of $\alpha_i$ and
    \Cref{thm:newtonstep}-\ref{item:thm:newton:strict}, we have that
    $h_{\mu}(v_i)$ strictly decreases.
    In addition, $h_{\mu}(v_i) \ge 0$ for all $v_i$; hence, it converges to some nonnegative number $\delta$.
    We will show that $\delta = 0$, which implies that $v_i$ converges to $\hat v(\mu)$ by \Cref{lem:divprop}-\ref{item:divposdef}.
    To begin, note that all iterations $v_i$ are contained in the set
      $\Omega := \{ v \in \mathbb{R}^m : \delta \le h_{\mu}(v) \le h_{\mu}(v_0) \}$.
    But $\Omega$ is compact since $h_{\mu}$ is strongly convex (\Cref{lem:divprop}).
    Letting $\alpha(v) := \max\{1, \frac{1}{2\beta}\|d(v, \mu)\|^2_{\infty}\}$,
    we conclude that the continuous function
      $D(v) :=  h_\mu(v) - h_\mu(v + \frac{1}{\alpha(v)} d(v, \mu))$
    obtains its infimum $D^*\in\mathbb{R}$ on $\Omega$.  But if $\delta > 0$,
    then $D^* > 0$, which implies that $h_{\mu}(v_m) \le h_{\mu}(v_0) - m D^* < 0$
    for all $m >  h_{\mu}(v_0)/D^{*}$, a contradiction since $h_{\mu}(v_m)  \ge
    0$.  Hence, $\delta = 0$. 
  \end{proof}
\end{thm}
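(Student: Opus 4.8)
The plan is to run the classical ``monotone descent plus compactness'' argument, using the strict-decrease guarantee of the step-size rule together with the strong convexity of $h_\mu$. First I would note that the prescribed step $t=1/\alpha_i$ is precisely the step to which \Cref{thm:newtonstep}-\ref{item:thm:newton:strict} applies, so $h_\mu(v_{i+1}) < h_\mu(v_i)$ whenever $v_i \ne \hat v(\mu)$ (equivalently $d(v_i,\mu)\ne 0$); and if some iterate equals $\hat v(\mu)$ the sequence is stationary and there is nothing left to prove. Hence $(h_\mu(v_i))_{i\ge 0}$ is nonincreasing, and since $h_\mu \ge 0$ by \Cref{lem:divprop}, it converges to some $\delta \ge 0$. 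The whole theorem then reduces to showing $\delta = 0$: once that holds, the closed form $h_\mu(v) = -2m + \sum_{i=1}^m 2\cosh\big((v-\hat v(\mu))_i\big)$ from \Cref{lem:divprop}-\ref{item:divposdef}, combined with the elementary bound $2\cosh(r)\ge 2+r^2$, gives $h_\mu(v)\ge \|v-\hat v(\mu)\|^2$, so $h_\mu(v_i)\to 0$ forces $v_i\to\hat v(\mu)$.

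To show $\delta=0$ I would argue by contradiction, assuming $\delta>0$. Then all iterates lie in $\Omega := \{v\in\mathbb{R}^m : \delta \le h_\mu(v) \le h_\mu(v_0)\}$, which is closed (continuity of $h_\mu$) and bounded (strong convexity of $h_\mu$ makes its sublevel sets bounded), hence compact. With $\alpha(v) := \max\{1,\tfrac{1}{2\beta}\|d(v,\mu)\|_\infty^2\}$, consider the per-iteration decrease $D(v) := h_\mu(v) - h_\mu\big(v + \tfrac{1}{\alpha(v)}d(v,\mu)\big)$. This $D$ is continuous on $\Omega$: by \Cref{thm:NewtonChar}, $x(v,\mu)$ is the unique solution of a linear system whose coefficient matrix $A^TQ(v)A + W$ is positive definite and varies continuously with $v$, so $x(v,\mu)$, and then $d(v,\mu)$ and $\alpha(v)$, depend continuously on $v$. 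Therefore $D$ attains its infimum $D^\ast$ on the compact set $\Omega$. Since $\delta>0$, no point of $\Omega$ is $\hat v(\mu)$, so $d(v,\mu)\ne 0$ throughout $\Omega$ and \Cref{thm:newtonstep}-\ref{item:thm:newton:strict} yields $D(v)>0$ for every $v\in\Omega$; thus $D^\ast>0$.

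Finally, telescoping the decrease gives $h_\mu(v_N) \le h_\mu(v_0) - N D^\ast$ for every $N$, so $h_\mu(v_N)\to-\infty$, contradicting $h_\mu\ge 0$. This forces $\delta=0$, and the translation from vanishing divergence to $v_i\to\hat v(\mu)$ described above completes the proof.

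I expect the only non-routine point to be the compactness of $\Omega$ together with the continuity of $D$. Boundedness of $\Omega$ is exactly where strong convexity of $h_\mu$ (\Cref{lem:divprop}-\ref{item:divstrongconvex}) is indispensable — for a merely convex $h_\mu$ the sublevel sets could be unbounded and the argument would collapse — and continuity of $D$ rests entirely on the well-posedness and continuous dependence of the Newton direction established in \Cref{thm:NewtonChar}. Everything else is bookkeeping.
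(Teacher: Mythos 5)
Your proposal is correct and follows essentially the same route as the paper's own proof: strict decrease from \Cref{thm:newtonstep}-\ref{item:thm:newton:strict}, convergence of $h_\mu(v_i)$ to some $\delta \ge 0$, compactness of $\Omega$ via strong convexity, continuity of the per-iteration decrease $D$, and the telescoping contradiction forcing $\delta = 0$. The extra details you supply (the stationary case, continuity of $d(v,\mu)$ via \Cref{thm:NewtonChar}, and the $2\cosh(r)\ge 2+r^2$ bound to pass from vanishing divergence to norm convergence) only make explicit what the paper leaves implicit.
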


\subsection{Local quadratic convergence}~\label{sec:quadconv}
 \Cref{thm:newtonstep} states that  a full Newton-step $v_{i+1} = v_i + d(v_i, \mu)$
 decreases the divergence $h_{\mu}(v_i)$ by a factor of at least
 $\frac{1}{2}\|d(v_i, \mu)\|^2_{\infty}$.  The next lemma, which generalizes~\cite[Corollary 3.1]{permenter2020geodesic},
 shows that we can also upper-bound $\|d(v_i, \mu)\|^2$, and hence $\|d(v_i, \mu)\|^2_{\infty}$,
 using $h_\mu(v_i)$.  
\begin{lem}~\label{lem:divBound}
  For all $v \in \mathbb{R}^m$ and $\mu > 0$, it holds that $  \|d(v, \mu) \|^2
  \le h_{\mu}(v) (1+\|d( v, \mu) \|)$. 
  \begin{proof}
    For brevity, let $d$ denote $d( v, \mu)$ and let $a = e^{v-\hat v(\mu)}$ and $g = a - a^{-1}$.
    As in Section~\ref{sec:globalconv}, let $f(t) = h_{\mu}(v + t d)$ such that $f(0) = h_{\mu}(v)$.
    Observing that $g$ is the gradient of $h_{\mu}(v)$ with respect to $v$, we have,
    by Lemma~\ref{lem:NewtonDirDerivs} and Cauchy-Schwartz, that
    \[
      f(0) + \|d\|^2  \le |f'(0)| = |\langle g, d \rangle| \le \|g\| \|d\|.
    \]
    We also have that $\|g\|^2 \le f(0)^2 + 4f(0)$ given that
\[
  f(0) = \|a+a^{-1} - 2\ones \|_1  \ge \|a+a^{-1} - 2\ones \|_2 =  \sqrt{\|a - a^{-1} \|^2 - 4 \langle \ones, a + a^{-1} - 2\ones\rangle} =  \sqrt{ \|g\|^2 - 4 f(0)}.
\]
    We conclude that $f(0) + \|d\|^2 \le \|d\| \sqrt{f(0)^2 + 4f(0)}$.
    Squaring each side and rearranging yields
    \begin{align*}
      0 \le \|d\|^2 (f(0)^2 + 4f(0)) - (f(0) + \|d\|^2)^2 = (-\|d\|^2 + f(0)(1+\|d\|)) (\|d\|^2 + f(0)(\|d\|-1)).
    \end{align*}
    This shows that if $f(0)(1+\|d\|) < \|d\|^2$, then $\|d\|^2  \le  f(0)(1-\|d\|)$,
    which in turn implies that
    \[
      f(0)(1+\|d\|)  < f(0)(1-\|d\|),
    \]
    which is impossible. Hence, $f(0)(1+\|d\|) \ge \|d\|^2$, as desired.




%
  \end{proof}
\end{lem}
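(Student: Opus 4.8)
The plan is to control $\|d\|$ (writing $d := \newton(v,\mu)$ for brevity) in terms of $h_\mu(v)$ by combining the first directional-derivative bound of \Cref{lem:NewtonDirDerivs} with a Cauchy--Schwarz estimate on the gradient of $h_\mu$, and then to extract the stated inequality through an elementary squaring-and-factoring argument. First I would set $a := e^{v - \hat v(\mu)}$ and use \Cref{lem:divprop} to write
\[
  h_\mu(v) = \langle a + a^{-1}, \ones \rangle - 2m = \sum_{i=1}^m (a_i + a_i^{-1} - 2),
\]
a sum of nonnegative terms, so that $h_\mu(v) = \|a + a^{-1} - 2\ones\|_1$. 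A direct computation shows that the gradient of $h_\mu$ at $v$ is $g := a - a^{-1}$; hence, setting $f(t) := h_\mu(v + td)$ as in Section~\ref{sec:globalconv}, we have $f(0) = h_\mu(v)$ and $f'(0) = \ip{g}{d}$.

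Next I would bound $\|g\|$ by $f(0)$. Writing $b_i := a_i + a_i^{-1} - 2 \ge 0$ and using $(a_i - a_i^{-1})^2 = (a_i + a_i^{-1})^2 - 4 = (b_i + 2)^2 - 4 = b_i^2 + 4 b_i$, we obtain
\[
  \|g\|^2 = \|b\|_2^2 + 4\|b\|_1 \le \|b\|_1^2 + 4\|b\|_1 = f(0)^2 + 4 f(0),
\]
the inequality being $\|b\|_2 \le \|b\|_1$. Since the first bullet of \Cref{lem:NewtonDirDerivs} gives $f'(0) \le -(f(0) + \|d\|^2) \le 0$, we have $f(0) + \|d\|^2 \le -f'(0) = |\ip{g}{d}|$, and Cauchy--Schwarz then yields
\[
  f(0) + \|d\|^2 \le \|g\|\,\|d\| \le \|d\|\sqrt{f(0)^2 + 4 f(0)}.
\]

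Finally I would square both sides and rearrange, obtaining $0 \le \|d\|^2(f(0)^2 + 4f(0)) - (f(0) + \|d\|^2)^2$. The key observation is that the right-hand side factors as
\[
  \bigl(-\|d\|^2 + f(0)(1 + \|d\|)\bigr)\bigl(\|d\|^2 + f(0)(\|d\| - 1)\bigr).
\]
If $f(0)(1 + \|d\|) < \|d\|^2$, then the first factor is negative, forcing the second to be nonpositive, i.e.\ $\|d\|^2 \le f(0)(1 - \|d\|)$; combining the two inequalities gives $f(0)(1 + \|d\|) < f(0)(1 - \|d\|)$, which is impossible (it would require $f(0) > 0$ and $\|d\| < 0$). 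Hence $\|d\|^2 \le f(0)(1 + \|d\|) = h_\mu(v)(1 + \|d\|)$, as claimed. I expect the only genuine bookkeeping to be the gradient identity $\nabla h_\mu(v) = a - a^{-1}$ together with the $\ell_1$--$\ell_2$ comparison behind $\|g\|^2 \le f(0)^2 + 4f(0)$; once those are in hand the squaring and factoring is purely mechanical, so that is where the ``main obstacle,'' such as it is, resides.
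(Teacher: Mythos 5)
Your proposal is correct and follows essentially the same route as the paper's proof: the gradient identity $\nabla h_\mu(v) = a - a^{-1}$, the bound $f(0)+\|d\|^2 \le |\langle g, d\rangle| \le \|g\|\|d\|$ via \Cref{lem:NewtonDirDerivs} and Cauchy--Schwarz, the $\ell_1$--$\ell_2$ comparison giving $\|g\|^2 \le f(0)^2 + 4f(0)$, and the same squaring, factoring, and contradiction step. Your explicit computation $(a_i-a_i^{-1})^2 = b_i^2 + 4b_i$ is just a slightly more detailed rendering of the paper's identity $\|g\|^2 = f(0)^2 + 4f(0) - (\|b\|_1^2 - \|b\|_2^2)$-type estimate, so there is nothing to correct.
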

\noindent Combining this with \Cref{thm:newtonstep}
yields the following quadratic convergence result,
which generalizes~\cite[Theorem 3.4]{permenter2020geodesic}.
\begin{thm}~\label{thm:newtonConv}
  For $\mu >0$ and $v_0 \in \mathbb{R}^m$, 
  let $v_{i+1} =
  v_i + \newton(v_i, \mu)$.  If $h_{\mu}(v_0) \le \theta \le \frac{1}{2}$, then
    $h_{\mu}(v_i) \le   \theta^{2^i}$.
  \begin{proof}
    Let $h_i = h_{\mu}(v_i)$ and $d_i =\newton(v_i, \mu)$.  
    Make the inductive hypothesis that $h_i \le \frac{1}{2}$. Then Lemma~\ref{lem:divBound}
    implies that $\|d_i \|  \le 1$. Hence,
  \[
    h_{i+1} \le \frac{1}{2} h_i \|d_i\|_{\infty}^2  \le   \frac{1}{2} h_i \|d_i\|^2 \le  \frac{1}{2} h_i ( \|d_i\|+1) h_i,
  \]
    where the first inequality is \Cref{thm:newtonstep}~\ref{item:thm:newton:full} and the last
    is~\Cref{lem:divBound}. Since $\|d_i\|  \le 1$, we further conclude
    that $h_{i+1} \le h^2_i$, and that $h_{i+1} < \frac{1}{2}$. By induction,  
    $h_{i+1} \le h^2_i$ must hold for all $i$, which implies that  $h_i \le (h_0)^{2^i}$ as claimed.
  \end{proof}
\end{thm}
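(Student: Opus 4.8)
The plan is to run a straightforward induction on $i$, leaning on the two facts already established: the one-step divergence decrease of \Cref{thm:newtonstep}~\ref{item:thm:newton:full} and the self-bounding estimate $\|d(v,\mu)\|^2 \le h_\mu(v)(1+\|d(v,\mu)\|)$ of \Cref{lem:divBound}. Write $h_i := h_\mu(v_i)$ and $d_i := d(v_i,\mu)$. The first move is to reduce the claim to a pure contraction statement: it suffices to show $h_{i+1} \le h_i^2$ whenever $h_i \le \tfrac{1}{2}$. Indeed, $h_0 \le \theta \le \tfrac{1}{2}$ by hypothesis, and if $h_i \le \theta^{2^i}$ then (since $\theta \le \tfrac{1}{2}$ forces $\theta^{2^i} \le \tfrac{1}{2}$) the contraction gives $h_{i+1} \le h_i^2 \le \theta^{2^{i+1}}$, so the invariant $h_i \le \theta^{2^i}$ propagates.

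The substantive step is to upgrade $h_i \le \tfrac{1}{2}$ into $\|d_i\| \le 1$. Feeding $h_i \le \tfrac{1}{2}$ into \Cref{lem:divBound} gives $\|d_i\|^2 \le \tfrac{1}{2}(1+\|d_i\|)$; if $\|d_i\| \ge 1$ then $\|d_i\|^2 \ge \|d_i\|$, so $\|d_i\|(1-h_i) \le h_i$ and hence $\|d_i\| \le h_i/(1-h_i) \le 1$ --- so $\|d_i\| \le 1$ holds unconditionally. In particular $\|d_i\|_\infty^2 \le \|d_i\|^2 \le 1$, which puts us in the full-step regime of \Cref{thm:newtonstep} (i.e.\ $\alpha_i = 1$), so \Cref{thm:newtonstep}~\ref{item:thm:newton:full} applies.

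Chaining the pieces then finishes the inductive step:
\[
  h_{i+1} \;\le\; \tfrac{1}{2}\,\|d_i\|_\infty^2\, h_i \;\le\; \tfrac{1}{2}\,\|d_i\|^2\, h_i \;\le\; \tfrac{1}{2}\, h_i\,(1+\|d_i\|)\, h_i \;\le\; h_i^2,
\]
where the first inequality is \Cref{thm:newtonstep}~\ref{item:thm:newton:full}, the third is \Cref{lem:divBound}, and the last uses $1+\|d_i\| \le 2$. Since $h_i \le \tfrac{1}{2}$, this also yields $h_{i+1} < \tfrac{1}{2}$, keeping the induction alive; iterating gives $h_i \le h_0^{2^i} \le \theta^{2^i}$.

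I expect the bound $\|d_i\| \le 1$ to be the one place requiring genuine care, since it does double duty: it is what certifies that \Cref{thm:newtonstep}~\ref{item:thm:newton:full} contributes the factor $\tfrac{1}{2}\|d_i\|_\infty^2$, and it simultaneously supplies the slack $1+\|d_i\| \le 2$ that sharpens \Cref{lem:divBound} into the exact contraction $h_{i+1} \le h_i^2$. The rest is bookkeeping; if one prefers to avoid appealing to $\alpha_i = 1$, the estimate $h_{i+1} \le \tfrac{1}{2}\|d_i\|_\infty^2 h_i$ can be read directly off \eqref{eq:newtonProof} at $t = 1$, since $\|d_i\|_\infty^2 \le 1 < 2$ rules out $f(1) > f(0)$.
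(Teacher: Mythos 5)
Your proof is correct and follows essentially the same route as the paper's: induction with the invariant $h_i \le \tfrac12$, using \Cref{lem:divBound} to get $\|d_i\|\le 1$, then chaining \Cref{thm:newtonstep}~\ref{item:thm:newton:full} with \Cref{lem:divBound} to obtain $h_{i+1}\le h_i^2$ and propagate $h_i \le \theta^{2^i}$. Your extra details (the explicit derivation of $\|d_i\|\le 1$ and the remark that one can bypass the $\alpha_i=1$ step via \eqref{eq:newtonProof} at $t=1$) only make explicit what the paper leaves implicit.
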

Using the results of this section, we can now concretely instantiate
the template log-domain interior-point method (\Cref{sec:ipm_template}).
The next section will state two algorithms.

\section{Algorithms}~\label{sec:alg}
\begin{figure}
  \centering
  \begin{minipage}[t]{.45\linewidth}~\begin{algorithm}[H]
  \SetAlgoLined\DontPrintSemicolon{}
  \SetKwFunction{algo}{shortstep}\SetKwFunction{proc}{Center}
  \SetKwProg{myalg}{Procedure}{}{}
  \myalg{\algo{$v_0,\mu_0,\mu_f$}}{
  \vspace{.1cm}
  $v \leftarrow v_0$, $\mu \leftarrow \mu_0$\\
  \While{$\mu  > \mu_f$} 
  {
    $\mu \leftarrow \frac{1}{k}\mu $ \\
    \For{$i = 1, 2, \ldots, N$}
    {
      $v \leftarrow v  + \newton(v,  \mu)$ \\
    }
 }
    \Return{$(v, \mu)$}  \\
  } 
  \end{algorithm}\end{minipage}~\begin{minipage}[t]{.65\linewidth}~\begin{algorithm}[H]
\vspace{-.35cm}
  \SetAlgoLined\DontPrintSemicolon{}
  \SetKwFunction{algo}{longstep}\SetKwFunction{proc}{Center}
  \SetKwProg{myalg}{Procedure}{}{}
  \myalg{\algo{$v_0, \mu_0, \mu_f$}}{
  \vspace{.1cm}
  $v \leftarrow v_0$, $\mu \leftarrow \mu_0$\\
    \While{$\mu  > \mu_f$ {\bf or} $\|d(v, \mu)\|_{\infty} > 1$} 
  {
    $\mu \leftarrow \min(\mu, \inf \{ \mu > 0 : \|d(v, \mu)\|_{\infty} \le 1 \})$ \\
    $\alpha \leftarrow \max(1, \frac{1}{2\beta}{\|d(v, \mu)\|^2_{\infty}})$ \\
    $v \leftarrow v + \frac{1}{\alpha} d(v, \mu)$\\
  }
    \Return{$(v, \mu)$}\\
}
\end{algorithm}
  \end{minipage}
  \caption{Algorithms for finding an approximate solution $x(v, \mu)$ to the QP~\eqref{qp:main}.
  For $(k, N)$ and $(v_0, \mu_0)$ specified by Theorem~\ref{thm:barrier}, the algorithm~{\tt shortstep}
  stays within the quadratic-convergence region of Newton's 
  method~(\Cref{thm:newtonConv}) and terminates
  in $\bigO( \sqrt m \log (\mu_0 \mu^{-1}_f) )$ Newton steps.
  For any step-size parameter $\beta \in [\frac{1}{2}, 1)$,  
 the algorithm {\tt longstep} 
  terminates given arbitrary initialization points (\Cref{thm:longstep}),
 and allows for construction of a $\mu_f$-sub-optimal feasible-point $x(v, \mu)$.
 }\label{fig:algorithms}
\end{figure}
The analysis of  Newton's method (\Cref{sec:Newton}) yields two
concrete IPMs (Figure~\ref{fig:algorithms}) for solving the QP~\eqref{qp:main}. 
The first is a \emph{short-step} algorithm:
it conservatively updates $\mu$, takes full Newton steps, and never
leaves the quadratic-convergence region of Newton's method.
The next is a \emph{long-step} algorithm: it aggressively updates $\mu$
via line-search and takes potentially damped steps.
The first algorithm, {\tt shortstep}, has an $\bigO(\sqrt m)$ iteration
bound, which is typical for interior-point methods.  The second algorithm, {\tt
longstep}, is intended for practical implementation.

\subsection{Short-step algorithm}
The algorithm {\tt shortstep} reduces the centering parameter $\mu$ by a
fixed-factor $k$ after every $N$ Newton steps. With proper selection of $k$
and $N$, it updates a given centered-point $\hat v(\mu_0)$  to $\hat v(\mu_f)$ 
using at most $C  \sqrt m \log (\mu_0 {\mu_f}^{-1})$
iterations, where $C$ is an explicit constant.  If the quadratic objective term
is zero ($W = 0$), it reduces to the short-step algorithm
of~\cite[Section 2]{permenter2020geodesic}. Its analysis is also identical once
we show that a key divergence bound still holds for non-zero $W$.

To begin, let $q(t) := 2 (\cosh(t) - 1)$. The next lemma establishes the
aforementioned bound and reduces to~\cite[Theorem 3.1]{permenter2020geodesic}
when $W = 0$.
\begin{lem}\label{lem:cpdist}
  For all $\mu_1, \mu_2 > 0$, the centered points $\hat v(\mu_1)$ and $\hat v(\mu_2)$
  satisfy
\[
  \frac{1}{m} h(\hat v(\mu_1), \hat v(\mu_2)) \le q(\frac{1}{2} \log  \frac{\mu_1}{\mu_2})
\]
\end{lem}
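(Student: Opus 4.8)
The plan is to characterize the centered points explicitly and then relate them in the log-domain. First, recall that $\hat v(\mu)$ together with its associated $x$ solves~\eqref{eq:logcentral}, so that $s := \sqrt\mu e^{-\hat v(\mu)}$ and $\lambda := \sqrt\mu e^{\hat v(\mu)}$ are the central-path slack and multiplier at parameter $\mu$, satisfying $A^T\lambda = Wx + c$, $s = Ax + b$, $s,\lambda > 0$, and $s_i\lambda_i = \mu$. Write $(x^1, s^1, \lambda^1)$ and $(x^2, s^2, \lambda^2)$ for the central-path points at $\mu_1$ and $\mu_2$. The key identity to extract is the value of $h(\hat v(\mu_1),\hat v(\mu_2))$ in terms of these primal-dual data. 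Using $e^{\hat v(\mu_j)} = \lambda^j/\sqrt{\mu_j}$ and $e^{-\hat v(\mu_j)} = s^j/\sqrt{\mu_j}$, one gets
\[
  h(\hat v(\mu_1),\hat v(\mu_2)) = \frac{\langle \lambda^1, s^2\rangle}{\sqrt{\mu_1\mu_2}} + \frac{\langle s^1, \lambda^2\rangle}{\sqrt{\mu_1\mu_2}} - 2m.
\]
So the lemma reduces to showing $\langle \lambda^1, s^2\rangle + \langle s^1,\lambda^2\rangle \le m\sqrt{\mu_1\mu_2}\,\big(q(\tfrac12\log\tfrac{\mu_1}{\mu_2}) + 2\big) = m\sqrt{\mu_1\mu_2}\cdot 2\cosh(\tfrac12\log\tfrac{\mu_1}{\mu_2}) = m(\mu_1 + \mu_2)$, i.e. to proving the cross-inner-product bound $\langle \lambda^1, s^2\rangle + \langle s^1,\lambda^2\rangle \le m(\mu_1+\mu_2)$.

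Next I would establish this cross bound using a monotonicity (cyclic-monotonicity-type) argument coming from the KKT structure. Consider $\langle \lambda^1 - \lambda^2,\, s^1 - s^2\rangle$. Writing $s^j = Ax^j + b$ and $A^T\lambda^j = Wx^j + c$, we have $\langle \lambda^1 - \lambda^2, s^1 - s^2\rangle = \langle \lambda^1 - \lambda^2, A(x^1 - x^2)\rangle = \langle A^T(\lambda^1 - \lambda^2), x^1 - x^2\rangle = \langle W(x^1 - x^2), x^1 - x^2\rangle \ge 0$, since $W \succeq 0$. Expanding the left side and using $\langle s^j,\lambda^j\rangle = m\mu_j$ gives $m\mu_1 + m\mu_2 - \langle\lambda^1, s^2\rangle - \langle s^1,\lambda^2\rangle \ge 0$, which is exactly the desired inequality $\langle\lambda^1, s^2\rangle + \langle s^1,\lambda^2\rangle \le m(\mu_1+\mu_2)$.

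Finally I would assemble the pieces: substitute the cross bound into the identity for $h$ to obtain $\frac1m h(\hat v(\mu_1),\hat v(\mu_2)) \le \frac{\mu_1+\mu_2}{\sqrt{\mu_1\mu_2}} - 2 = 2\cosh(\tfrac12\log\tfrac{\mu_1}{\mu_2}) - 2 = q(\tfrac12\log\tfrac{\mu_1}{\mu_2})$, which is the claim. I expect the only genuine subtlety to be bookkeeping: making sure the exponential-to-primal-dual substitution is done cleanly (the $\sqrt{\mu_1\mu_2}$ factors and the $-2m$ term) and recalling that $q(t)+2 = 2\cosh t$ so that the hyperbolic-cosine expression collapses to $(\mu_1+\mu_2)/\sqrt{\mu_1\mu_2}$. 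The conceptual core — the inequality $\langle W(x^1-x^2), x^1-x^2\rangle \ge 0$ replacing the $W=0$ argument of~\cite[Theorem 3.1]{permenter2020geodesic} — is short; everything else is the routine algebra the problem statement says I may skip.
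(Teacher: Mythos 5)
Your proof is correct and is essentially the paper's argument in different bookkeeping: both reduce the claim to the nonnegativity of $(x^1-x^2)^T W (x^1-x^2)$, which is exactly the term the paper drops after expanding $h(\hat v(\mu_1),\hat v(\mu_2)) + 2m$ in the variables $w_j = e^{\hat v(\mu_j)}$, $k_j = \sqrt{\mu_j}$. The only cosmetic difference is that you package the algebra as the standard central-path monotonicity inequality $\langle \lambda^1-\lambda^2,\, s^1-s^2\rangle \ge 0$ together with $\langle s^j,\lambda^j\rangle = m\mu_j$, whereas the paper takes inner products of the feasibility identities with $w_1$ and $w_2$ directly.
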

\begin{proof}
  Let $w_1 = e^{\hat v(\mu_1)}$ and $w_2 = e^{\hat v(\mu_2)}$
  and let $k_1 = \sqrt{\mu_1}$ and $k_2 = \sqrt{\mu_2}$.
  By primal-dual feasibility,
\[
   b = k_1 w^{-1}_{1} - Ax_1 = k_2 w^{-1}_{2} - Ax_2.
\]
Taking inner-products with $w_1$ and $w_2$ gives:
\[
  \frac{ k_1 m + w^T_1 A(x_2-x_1)}{k_2} =  w^T_1 w^{-1}_{2} , \qquad \frac{ k_2 m + w^T_2 A(x_1-x_2)}{k_1} =  w^T_2 w^{-1}_{1} 
\]
Adding  and simplifying yields:
  \begin{align*}
    h(\hat v(\mu_1), \hat v(\mu_2)) + 2m  &:= w^T_1 w^{-1}_{2}  + w^T_2 w^{-1}_{1}  \\
                                          &=  \frac{(k^2_2 + k^2_1) m + (k_1w^T_1  - k_2w^T_2 ) A(x_2-x_1)}{k_1 k_2} \\
                                          &= \frac{(k^2_2 + k^2_1)  m + (c + Wx_1   -(c+ W x_2) )^T(x_2-x_1)}{k_1 k_2} \\
                                          &= \frac{(k^2_2 + k^2_1)  m + (W(x_1-x_2) )^T(x_2-x_1)}{k_1 k_2} \\
                                          &= \frac{(k^2_2 + k^2_1)  m - (x_1-x_2)^{T}W(x_2-x_1)}{k_1 k_2} \\
                                          &\le \frac{(k^2_2 + k^2_1)  m }{k_1 k_2} 
                                          = m(\frac{k_2}{k_1} +  \frac{k_1}{k_2})
                                          = 2m \cosh(\log \frac{k_1}{k_2}) 
                                          = 2m \cosh(\log \sqrt{\frac{\mu_1}{\mu_2}}) 
  \end{align*}
Subtracting $2m$ proves the claim.
\end{proof}
\noindent We also need a lemma from~\cite{permenter2020geodesic} that relates
divergence to Euclidean distance.
\begin{lem}[Lemma 3.2 of~\cite{permenter2020geodesic}]\label{lem:divanddist}
  For all $v_1, v_2 \in \mathbb{R}^m$ it holds that $\|v_1 - v_2\|^2 \le  h(v_1, v_2) \le q(\|v_1-v_2\|)$.
\end{lem}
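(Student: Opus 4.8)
The claim is the two-sided bound $\|v_1 - v_2\|^2 \le h(v_1, v_2) \le q(\|v_1 - v_2\|)$, where $q(t) = 2(\cosh t - 1)$. By Lemma~\ref{lem:divprop}-\ref{item:divposdef}, $h(v_1, v_2) = \sum_{i=1}^m 2(\cosh(v_{1,i} - v_{2,i}) - 1) = \sum_{i=1}^m q(\delta_i)$, where $\delta_i := v_{1,i} - v_{2,i}$. So the whole statement reduces to a scalar fact about the function $q$ applied coordinatewise. Setting $r = \|v_1 - v_2\|$, so that $\sum_i \delta_i^2 = r^2$, I need $r^2 \le \sum_i q(\delta_i) \le q(r)$.

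\textbf{Lower bound.} Since $\cosh t = \sum_{k\ge 0} t^{2k}/(2k)! \ge 1 + t^2/2$, we get $q(\delta_i) = 2(\cosh \delta_i - 1) \ge \delta_i^2$ termwise. Summing over $i$ gives $\sum_i q(\delta_i) \ge \sum_i \delta_i^2 = r^2$, which is the left inequality.

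\textbf{Upper bound.} This is the part needing a genuine (if small) argument. I want $\sum_i q(\delta_i) \le q(r)$ with $\sum_i \delta_i^2 = r^2$. Using the power series, $\sum_i q(\delta_i) = \sum_i \sum_{k\ge 1} 2\delta_i^{2k}/(2k)! = \sum_{k\ge 1} \frac{2}{(2k)!}\sum_i \delta_i^{2k}$, while $q(r) = \sum_{k\ge 1}\frac{2}{(2k)!} r^{2k} = \sum_{k\ge 1}\frac{2}{(2k)!}\big(\sum_i \delta_i^2\big)^k$. So it suffices to show, for each $k \ge 1$, that $\sum_i \delta_i^{2k} \le \big(\sum_i \delta_i^2\big)^k$, i.e. $\|\delta\|_{2k}^{2k} \le \|\delta\|_2^{2k}$ where $\delta = (\delta_i)$. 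This is the standard monotonicity of $\ell_p$ norms: for $p = 2k \ge 2$, $\|\delta\|_p \le \|\delta\|_2$ (equivalently, $\sum_i |\delta_i|^p \le (\sum_i \delta_i^2)^{p/2}$, which follows since each $|\delta_i|^2 \le \sum_j \delta_j^2$, hence $|\delta_i|^p = |\delta_i|^2 \cdot |\delta_i|^{p-2} \le (\sum_j\delta_j^2) \cdot (\sum_j \delta_j^2)^{(p-2)/2}$ after summing appropriately — more cleanly, raise the inequality $\sum_i a_i \le (\sum_i a_i)$ trivially, or just cite norm monotonicity). Comparing the two series term by term yields $\sum_i q(\delta_i) \le q(r)$.

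\textbf{Main obstacle.} There is no serious obstacle; the only thing to get right is the term-by-term comparison of the two power series, which hinges on $\ell_p$-norm monotonicity for even integer $p$. One should note $q$ has nonnegative Taylor coefficients so the series manipulation (interchanging the sum over $i$ with the sum over $k$) is justified by absolute convergence, and the comparison is valid coefficientwise. I would state this as: $h(v_1,v_2) = \sum_i q(\delta_i)$; apply $\cosh t \ge 1 + t^2/2$ termwise for the lower bound; and for the upper bound expand both sides in the Taylor series of $q$ and invoke $\sum_i \delta_i^{2k} \le (\sum_i \delta_i^2)^k$. (Alternatively one could cite this directly as Lemma~3.2 of~\cite{permenter2020geodesic}, which the paper in fact does, so a short self-contained argument along these lines suffices.)
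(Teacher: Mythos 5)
Your proof is correct. Note that the paper itself offers no proof of this lemma — it is imported verbatim as Lemma~3.2 of the cited geodesic-IPM reference — so there is nothing internal to compare against; your self-contained argument is a perfectly adequate substitute. The structure is right: by the explicit formula in Lemma~\ref{lem:divprop}, $h(v_1,v_2)=\sum_i q(\delta_i)$ with $\delta_i=(v_1-v_2)_i$, the lower bound is the termwise inequality $\cosh t\ge 1+t^2/2$, and the upper bound follows from the term-by-term power-series comparison, which rests on $\sum_i \delta_i^{2k}\le\bigl(\sum_i\delta_i^2\bigr)^k$; that inequality is immediate by expanding $\bigl(\sum_i a_i\bigr)^k$ with $a_i=\delta_i^2\ge 0$ and discarding the nonnegative cross terms (your parenthetical detour through $\ell_p$-norm monotonicity is slightly garbled as written, but the cited fact is the correct and standard one), and the interchange of the sums over $i$ and $k$ is justified since all terms are nonnegative.
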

\noindent Using these lemmas, the $\mu$-update factor $k$ and the inner-iteration count $N$
are selected such that the divergence $h_{\mu}(v)$ remains
bounded at each iteration by a specified $\theta \in (0, \frac{1}{2}]$. 
This in turn implies that each Newton step is
quadratically convergent (\Cref{thm:newtonConv}).  
To ensure that $h_{\frac{1}{k} \mu}(v) \le \theta$  just before $\mu$ updates,
 we use the following upper-bound
\begin{align}\label{eq:tri}
  h_{\frac{1}{k} \mu}(v)   \le q( \|v-\hat v(\mu) \| +  \|  \hat v(\mu) -\hat v(\frac{1}{k}\mu) \|),
\end{align}
which follows from~\Cref{lem:divanddist} and the triangle inequality.
For a specified $\epsilon$, the parameter $N$ is chosen to ensure that
$\|v-\hat v(\mu) \| \le \epsilon$ using \Cref{thm:newtonConv} and
\Cref{lem:divanddist}.  Using Lemma~\ref{lem:cpdist} and \Cref{lem:divanddist},
the parameter $k$ is then chosen to ensure that
$\|  \hat v(\mu) -\hat v(\frac{1}{k}\mu) \| \le q^{-1}(\theta) - \epsilon$,
where $q^{-1} : \mathbb{R} \rightarrow \mathbb{R}_{+}$ denotes
the nonnegative inverse of $q$.
Together with~\eqref{eq:tri}, this implies that  $h_{\frac{1}{k} \mu}(v) \le \theta$. 

A formal statement of the $(k, N)$-selection criteria and the convergence
guarantees of {\tt shortstep} follow. We omit proof, as it is identical
to~\cite[Theorem~2.1]{permenter2020geodesic}.
\begin{thm}~\label{thm:barrier}
  Let  {\tt shortstep} (\Cref{fig:algorithms}) have parameters $(k, N)$ that satisfy, for
  some  $\frac{1}{2} \ge  \theta > 0$ and $\qup^{-1}(\theta) > \epsilon > 0$, the conditions
  \begin{align}\label{eq:mainass}
    \theta^{2^N}  \le \epsilon^2, \qquad \frac{1}{2} \log k = \qup^{-1}(\frac{1}{m} \zeta^2),
  \end{align}
  where $\zeta := \qup^{-1}(\theta) - \epsilon$.   
  The following statements hold for {\tt shortstep} given input $(\hat v(\mu_0), \mu_0, \mu_f)$: 
  \begin{enumerate}[label= (\alph*)]
      \item At most  $N \lceil c^{-1} \sqrt{m} \log \frac{\mu_0}{\mu_f} \rceil$ Newton steps execute,
        where $c := 2\qup^{-1}(\zeta^2)$.
      \item  The output $(v, \mu)$ satisfies $\|v-\hat v(\mu)\| \le \epsilon$ and $\mu \le \mu_f$.
    \end{enumerate}
\end{thm}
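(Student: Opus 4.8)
The plan is to track the divergence $h_\mu(v)$ of the current iterate $v$ throughout a single outer iteration (one $\mu$-update followed by $N$ Newton steps) and show inductively that it always stays at most $\theta \le \tfrac12$, so that every Newton step is in the quadratic-convergence regime of \Cref{thm:newtonConv}. Concretely, I would maintain the loop invariant: at the start of each outer iteration, $\|v - \hat v(\mu)\| \le \epsilon$, hence $h_\mu(v) \le q(\epsilon) \le q(q^{-1}(\theta)) = \theta$ by \Cref{lem:divanddist} (using monotonicity of $q$ on $\mathbb{R}_{\ge 0}$ and $\epsilon < q^{-1}(\theta)$). This holds initially because the input is the \emph{exact} centered point $\hat v(\mu_0)$, for which $\|v-\hat v(\mu_0)\|=0$.

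Next I would analyze the effect of the $\mu$-update $\mu \mapsto \tfrac1k\mu$. Using the triangle-inequality bound~\eqref{eq:tri}, $h_{\frac1k\mu}(v) \le q(\|v-\hat v(\mu)\| + \|\hat v(\mu) - \hat v(\tfrac1k\mu)\|)$. The first term is $\le \epsilon$ by the invariant. For the second term, \Cref{lem:divanddist} gives $\|\hat v(\mu)-\hat v(\tfrac1k\mu)\|^2 \le h(\hat v(\mu),\hat v(\tfrac1k\mu))$, and \Cref{lem:cpdist} bounds this by $m\, q(\tfrac12\log k)$; the second condition in~\eqref{eq:mainass}, namely $\tfrac12\log k = q^{-1}(\tfrac1m\zeta^2)$, is exactly what makes $m\,q(\tfrac12\log k) = \zeta^2$, so $\|\hat v(\mu)-\hat v(\tfrac1k\mu)\| \le \zeta = q^{-1}(\theta)-\epsilon$. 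Adding, the argument of $q$ in~\eqref{eq:tri} is at most $q^{-1}(\theta)$, giving $h_{\frac1k\mu}(v) \le \theta \le \tfrac12$. Then $N$ full Newton steps drive the divergence down: by \Cref{thm:newtonConv}, after $N$ steps $h_{\frac1k\mu}(v) \le \theta^{2^N} \le \epsilon^2$ using the first condition in~\eqref{eq:mainass}, and \Cref{lem:divanddist} (the lower bound $\|v_1-v_2\|^2 \le h(v_1,v_2)$) yields $\|v - \hat v(\tfrac1k\mu)\| \le \epsilon$, restoring the invariant with $\mu$ replaced by $\tfrac1k\mu$. This simultaneously proves part~(b): on termination $\mu \le \mu_f$ by the loop condition, and the invariant gives $\|v-\hat v(\mu)\|\le\epsilon$.

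For the iteration count in part~(a), I would count outer iterations. The loop runs while $\mu > \mu_f$ and each pass multiplies $\mu$ by $\tfrac1k < 1$, so the number of outer iterations is $\lceil \log_k(\mu_0/\mu_f)\rceil = \lceil \tfrac{\log(\mu_0/\mu_f)}{\log k}\rceil$. Each outer iteration costs $N$ Newton steps, so the total is $N\lceil \tfrac{\log(\mu_0/\mu_f)}{\log k}\rceil$. It remains to show $\log k \ge c/\sqrt m$ with $c = 2q^{-1}(\zeta^2)$; I would get this from a convexity/subadditivity estimate on $q^{-1}$: since $q(t)=2(\cosh t - 1)$ is convex with $q(0)=0$, $q$ is superadditive on $\mathbb{R}_{\ge0}$, equivalently $q^{-1}$ is subadditive, so $q^{-1}(m\cdot\tfrac1m\zeta^2)$... more directly, one uses that $q^{-1}(x)/\sqrt{x}$ is bounded below near $0$ (from $q(t)\approx t^2$), giving $\tfrac12\log k = q^{-1}(\tfrac1m\zeta^2) \ge \tfrac{1}{\sqrt m}q^{-1}(\zeta^2)$ up to the constant, hence $\log k \ge c/\sqrt m$ and the bound follows.

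The main obstacle is the last estimate — converting the exact identity $\tfrac12\log k = q^{-1}(\tfrac1m\zeta^2)$ into the clean $\sqrt m$-scaling $\log k = \Omega(1/\sqrt m)$ — which requires a quantitative lower bound on $q^{-1}$ of the form $q^{-1}(x/m) \ge q^{-1}(x)/\sqrt m$ for $x$ in the relevant range; this rests on the concavity-type behavior of $q^{-1}$ (equivalently the convexity of $q$ with $q(0)=0$, which makes $t\mapsto q(t)/t$ nondecreasing, hence $s\mapsto q^{-1}(s)/\sqrt s$... ) and is the only place where the specific form of $q$ is genuinely used. Everything else is bookkeeping with the divergence bounds already established. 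Since the argument is word-for-word that of~\cite[Theorem~2.1]{permenter2020geodesic} with \Cref{lem:cpdist} substituted for its $W=0$ analogue, I would simply cite that reference for the proof.
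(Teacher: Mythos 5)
Your proposal follows essentially the route the paper intends: the paper states this theorem without proof, deferring to \cite[Theorem~2.1]{permenter2020geodesic}, and your loop-invariant argument (maintain $\|v-\hat v(\mu)\|\le\epsilon$, bound $h_{\frac{1}{k}\mu}(v)\le\theta$ after each $\mu$-update via~\eqref{eq:tri}, \Cref{lem:cpdist} and \Cref{lem:divanddist}, then drive the divergence down to $\epsilon^2$ with $N$ quadratically convergent full steps via \Cref{thm:newtonConv}) is exactly the sketch preceding the theorem; that part is correct and gives part~(b).

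The one place your write-up does not hold together is the step you yourself flag as the crux: deducing $\log k \ge c/\sqrt m$ from $\tfrac{1}{2}\log k = q^{-1}(\tfrac{1}{m}\zeta^2)$. Convexity of $q$ with $q(0)=0$ only gives that $t\mapsto q(t)/t$ is nondecreasing, equivalently $q^{-1}(x/m)\ge q^{-1}(x)/m$, which yields an $O(m)$ outer-iteration bound rather than $O(\sqrt m)$; and an appeal to ``$q^{-1}(x)/\sqrt x$ bounded below near $0$ up to a constant'' cannot produce the exact constant $c=2q^{-1}(\zeta^2)$ appearing in part~(a). What you actually need is that $t\mapsto q(t)/t^2$ is nondecreasing on $(0,\infty)$, equivalently $m\,q(t/\sqrt m)\le q(t)$, which does hold for this specific $q$: either from the Taylor series $\cosh(t)-1=\sum_{k\ge1}t^{2k}/(2k)!$, or from $q(t)=4\sinh^2(t/2)$ together with $\sinh(\alpha s)\le\alpha\sinh(s)$ for $\alpha\in[0,1]$, $s\ge0$ (convexity of $\sinh$ through the origin). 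With that fact, $\tfrac{1}{2}\log k=q^{-1}(\tfrac{1}{m}\zeta^2)\ge \tfrac{1}{\sqrt m}\,q^{-1}(\zeta^2)$, so at most $\lceil c^{-1}\sqrt m\log(\mu_0/\mu_f)\rceil$ outer iterations occur, each costing $N$ Newton steps, and part~(a) follows. Aside from this sharpening, your argument is sound and matches the paper's (cited) proof.
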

\noindent 
Observe that {\tt shortstep} takes only full Newton-steps  and that
its convergence guarantees assume a centered initialization point, i.e., that $v_0 = \hat v(\mu_0)$. 
The next algorithm {\tt longstep} will support arbitrary initialization through the use
of damped Newton steps.

\subsection{Long-step algorithm}~\label{sec:longstep}
The procedure {\tt longstep}~(\Cref{fig:algorithms})  supports
arbitrary initialization and performs more aggressive updates of 
the centering parameter $\mu$.
At each
iteration, it finds the smallest $\mu$ for
which the Newton direction $d(v, \mu)$ satisfies $\|d(v, \mu)\|_{\infty} \le 1$, if
such a $\mu$ exists.  It terminates once both $\|d(v, \mu)\|_{\infty} \le 1$ and $\mu \le \mu_f$.
The condition $\|d(v, \mu)\|_{\infty} \le 1$ is motivated by the following lemma,
which shows that under this condition, the approximate solution $x(v, \mu)$
associated with the Newton direction (Definition~\ref{defn:NewtonDirection})
is feasible and has  bounded sub-optimality.
\begin{lem}\label{lem:feas}
  For $\mu > 0$ and $v \in \mathbb{R}^m$, let $d = \newton(v, \mu)$ and $x = x(v, \mu)$.  
  Let  $\lambda = \sqrt\mu(e^v + e^v \circ  d)$ and $s = \sqrt\mu(e^{-v} - e^{-v} \circ  d)$.
  If $\|d\|_{\infty} \le 1$,  then $(x, s, \lambda)$ satisfies the primal-dual feasibility conditions
\begin{align*}
Ax + b = s,  \;\;A^T \lambda = Wx + c, \;\; \lambda \ge 0, \;\;s \ge 0.
\end{align*}
  Further,  $\|s \circ \lambda\|_1   = \mu (m - \|d\|^2)$.
  \begin{proof}
  The equality constraints hold by definition of $\newton(v, \mu)$.
  Nonnegativity of both $s$ and $\lambda$ hold by their definition and the
    fact that $\|d\|_{\infty} \le 1$.
  Finally, since $s, \lambda \ge 0$, we have that $\|\lambda \circ s\|_1 = \langle s, \lambda \rangle$.
    Expanding $\langle s, \lambda \rangle$ using the definition of $s$ and $\lambda$
    proves the claim.

   %

  \end{proof}
\end{lem}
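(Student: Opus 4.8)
The plan is to verify each claimed property directly from Definition~\ref{defn:NewtonDirection}, which characterizes $x(v,\mu)$ and $d(v,\mu)$ as the $x$ and $d$ satisfying
\[
\sqrt\mu\, A^T(e^v + e^v \circ d) = Wx + c, \qquad \sqrt\mu\,(e^{-v} - e^{-v}\circ d) = Ax + b.
\]
The left-hand sides of these two equations are, under the stated substitutions $\lambda = \sqrt\mu(e^v + e^v\circ d)$ and $s = \sqrt\mu(e^{-v} - e^{-v}\circ d)$, precisely $A^T\lambda$ and $s$. Hence the two equality constraints $A^T\lambda = Wx + c$ and $s = Ax + b$ hold with no further computation.

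For the nonnegativity conditions I would argue coordinatewise. Writing $\lambda_i = \sqrt\mu\, e^{v_i}(1 + d_i)$ and $s_i = \sqrt\mu\, e^{-v_i}(1 - d_i)$, the factors $\sqrt\mu\, e^{\pm v_i}$ are strictly positive, so the signs of $\lambda_i$ and $s_i$ are determined by $1 + d_i$ and $1 - d_i$, respectively. The hypothesis $\|d\|_\infty \le 1$ gives $-1 \le d_i \le 1$ for every $i$, so $1 + d_i \ge 0$ and $1 - d_i \ge 0$; therefore $\lambda \ge 0$ and $s \ge 0$.

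For the final identity I would first note that, since $s \ge 0$ and $\lambda \ge 0$ have just been established, every product $s_i \lambda_i$ is nonnegative, so $\|s \circ \lambda\|_1 = \sum_i s_i\lambda_i = \langle s, \lambda\rangle$. Then, using the coordinate formulas above, $s_i\lambda_i = \mu\, e^{-v_i}(1 - d_i)\, e^{v_i}(1 + d_i) = \mu(1 - d_i^2)$, and summing over $i$ gives $\|s \circ \lambda\|_1 = \mu\sum_{i=1}^m (1 - d_i^2) = \mu(m - \|d\|^2)$, as claimed.

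I do not expect a genuine obstacle here: the lemma is an unpacking of the definition of the Newton direction. The only points worth flagging in the write-up are that the hypothesis $\|d\|_\infty \le 1$ is used in both directions — the lower bound $d_i \ge -1$ for $\lambda \ge 0$ and the upper bound $d_i \le 1$ for $s \ge 0$ — and that replacing the $\ell_1$ norm of $s \circ \lambda$ by the inner product $\langle s,\lambda\rangle$ is valid only because $s$ and $\lambda$ have already been shown to be nonnegative.
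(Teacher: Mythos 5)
Your proposal is correct and follows essentially the same route as the paper: the equalities are read off from Definition~\ref{defn:NewtonDirection}, nonnegativity follows coordinatewise from $\|d\|_\infty \le 1$, and the identity $\|s \circ \lambda\|_1 = \mu(m - \|d\|^2)$ comes from expanding $\langle s, \lambda\rangle$ term by term. Your write-up simply makes explicit the coordinate computations that the paper leaves to the reader.
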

\noindent It remains to show that the algorithm will actually terminate.
To prove this, we note that the algorithm,
by design, monotonically decreases $\mu$ and always 
applies an update $v \leftarrow v + \frac{1}{\alpha}d(v, \mu)$
with $\|d(v, \mu)\|_{\infty} \ge 1$. We will show 
that infinitely many iterations, and the resulting 
convergence of $\mu$, contradicts $\|d(v, \mu)\|_{\infty} \ge 1$.
Our analysis also selects the step-size parameter $\beta$ 
to ensure global convergence (Theorem~\ref{thm:globalconv}) and full Newton steps  
when $\|d(v, \mu)\|_{\infty} \le 1$.
\begin{thm}~\label{thm:longstep}
  For any step-size parameter $\beta \in [\frac{1}{2}, 1)$ 
  and input $(v_0, \mu_0, \mu_f) \in \mathbb{R}^m \times \mathbb{R} \times \mathbb{R}$ with $\mu_0 > \mu_f > 0$,
  the algorithm {\tt longstep} terminates and returns $(v, \mu)$ with $\mu \le \mu_f$.
  Further, letting $x$ denote $x(v, \mu)$, we have that
  \[
    Ax+ b \ge 0,    \qquad \frac{1}{2} x^T  W x + c^{T}x \le V^* +  \mu  m,
  \]
  where $V^*$ denotes the optimal value of QP~\eqref{qp:main}.
  \begin{proof}

    Let $v_i$, $\mu_i$ and $\alpha_i$ denote the sequences generated
    by {\tt longstep} indexed by $i$ such that
    $v_{i+1} = v_i + \frac{1}{\alpha_i}d(v_i, \mu_{i})$.
    Now suppose the algorithm does not terminate.
    We first consider the case where $\mu$ updates only finitely many times.
    In this case, there exists an $M$ such that 
      $\mu_i = \mu_{M}$ for all iterations $i > M$,
      which implies that $v_i$ converges to $\hat v(\mu_M)$ by Theorem~\ref{thm:globalconv}.
      But by selection of $\mu_i$, we also have  that $\|d(v_i, \mu_i)\|_{\infty} \ge 1$ for all $i$,
    a contradiction. Hence, in this case, the algorithm must terminate.

    Now suppose that $\mu$ updates infinitely many times
    and  let $\sigma_k$ denote the subsequence of iterations $i$
    where $\mu$ changes, i.e., $\mu_{\sigma_{k}} <  \mu_{\sigma_{k-1}}$
    and $\mu_i = \mu_{\sigma_{k-1}}$ for $  \sigma_{k} > i \ge  \sigma_{k-1}$.
    For brevity, let $h_{k} : \mathbb{R}^m \rightarrow \mathbb{R}$ denote the 
    divergence $h(v,  \hat v(\mu_{\sigma_k}))$ as a function of $v$.
    We note that
    \begin{align}~\label{eq:convProof}
      \frac{1}{2} h_{k}(v_{\sigma_k}) \ge h_{k}(v_{1 + \sigma_k})   \ge        h_{k}(v_{\sigma_{k+1}}),
    \end{align}
    where the first inequality holds by Theorem~\ref{thm:newtonstep} since
    $\|d(v_i, \mu_i)\|_{\infty} = 1$ for $i = \sigma_k$ and the second because
    Newton iterations decrease $h_{\mu}(v_i)$ for fixed $\mu$.

    Since $\mu_{\sigma_k}$ is bounded below and monotonically decreasing, it converges. 
    Since the map $\mu \mapsto \hat v(\mu)$ is continuous,
    the sequence $\hat v(\mu_{\sigma_{k+1}})$ also converges.
    Hence, for any $\epsilon > 0$, there exists an $N$ such that 
    for all $k > N$,
    \[
      \|\hat v(\mu_{\sigma_k}) - \hat v(\mu_{\sigma_{k+1}})\| < \epsilon.
    \]
   This shows that for $k > N$,
    \begin{align*}
      h_{k+1}(v_{\sigma_{k+1}}) + 2m 
                                            &=  2 \sum^m_{j=1} \cosh( [v_{\sigma_{k+1}} - \hat v(\mu_{\sigma_{k+1}})]_j  ) \\
                                            &\le  2 \sum^m_{j=1} \cosh( |[v_{\sigma_{k+1}} - \hat v(\mu_{\sigma_{k}})]_j|+ \epsilon ) \\
                                            &\le 2 \sum^m_{j=1} \cosh( [v_{\sigma_{k+1}} - \hat v(\mu_{\sigma_{k}})]_j ) (\cosh(\epsilon) + \sinh(\epsilon)) \\
                                            & =   (h_{k}(v_{\sigma_{k+1}}) + 2m)  (\cosh(\epsilon) + \sinh(\epsilon)) \\
                                            & \le   (\frac{1}{2} h_{k}(v_{\sigma_{k}}) + 2m)  (\cosh(\epsilon) + \sinh(\epsilon)),
    \end{align*}
    where the third line uses the inequality $\cosh(x+y) \le \cosh(x)( \cosh(y) + \sinh(|y|))$
    and the last uses~\eqref{eq:convProof}. 
    Noting that $e^\epsilon = \cosh(\epsilon) + \sinh(\epsilon)$, we rearrange
    the last line to conclude that 
    \[
      h_{k+1}(v_{\sigma_{k+1}}) \le  \frac{e^\epsilon}{2} h_{k}(v_{\sigma_{k}}) + 2m(e^\epsilon-1).
    \]
    This shows that $h_{k}(v_{\sigma_{k}})$ is upper-bounded by 
    a sequence of the form $a_{k+1} = c_1(\epsilon) a_{k} + c_2(\epsilon)$,
    which, if $|c_1| < 1$,  converges to $L = \frac{c_2}{1-c_1}$.
    For any $\delta > 0$, we can pick $\epsilon$ small enough to show that $L < \delta$.
    This shows that $h_{k}(v_{\sigma_{k}})$ converges to zero, contradicting $\|d(v_{\sigma_{k}}, \mu_{\sigma_{k}})\| \ge 1$
    by \Cref{lem:divBound}.  Hence, the algorithm must terminate.
    Finally, the feasibility and suboptimality guarantees for $x(v, \mu)$
    follow from \Cref{lem:feas} and weak duality.
  \end{proof}
\end{thm}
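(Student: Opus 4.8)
The plan is to show termination by a proof by contradiction, then derive the feasibility and suboptimality guarantees as an easy consequence of \Cref{lem:feas} and weak duality. For termination, I would first observe that the algorithm, by design, monotonically decreases $\mu$ and at every iteration applies an update $v \leftarrow v + \frac{1}{\alpha}d(v, \mu)$ with $\|d(v, \mu)\|_{\infty} \ge 1$ (otherwise the while-loop exits whenever $\mu \le \mu_f$ as well). Assuming non-termination, I would split into two cases depending on whether $\mu$ changes finitely or infinitely often.

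The finite case is short: if $\mu$ stabilizes at some value $\mu_M$ after iteration $M$, then for $i > M$ the iterates $v_i$ are pure Newton iterations with fixed $\mu$, which converge to $\hat v(\mu_M)$ by \Cref{thm:globalconv}; but then $\|d(v_i, \mu_i)\|_\infty \to 0$, contradicting $\|d(v_i,\mu_i)\|_\infty \ge 1$. The infinite case is the main obstacle. Here I would index by the subsequence $\sigma_k$ of iterations where $\mu$ decreases, and track the divergence $h_k(v) := h(v, \hat v(\mu_{\sigma_k}))$. The key observations are: (i) at iteration $\sigma_k$ we have $\|d\|_\infty = 1$ exactly (by the choice $\mu \leftarrow \inf\{\mu : \|d(v,\mu)\|_\infty \le 1\}$ and continuity), so \Cref{thm:newtonstep}\ref{item:thm:newton:full} with $\alpha = 1$ gives $h_k(v_{1+\sigma_k}) \le \frac12 h_k(v_{\sigma_k})$; and (ii) subsequent Newton steps before $\sigma_{k+1}$ only decrease $h_k$, so $h_k(v_{\sigma_{k+1}}) \le \frac12 h_k(v_{\sigma_k})$. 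The remaining issue is that $h_{k+1}$ is measured against a \emph{different} center $\hat v(\mu_{\sigma_{k+1}})$, so I need to relate $h_{k+1}(v_{\sigma_{k+1}})$ to $h_k(v_{\sigma_{k+1}})$. Since $\mu_{\sigma_k}$ is monotone and bounded below it converges, and by continuity of $\mu \mapsto \hat v(\mu)$ the centers $\hat v(\mu_{\sigma_k})$ form a Cauchy sequence; hence for any $\epsilon > 0$, eventually $\|\hat v(\mu_{\sigma_k}) - \hat v(\mu_{\sigma_{k+1}})\| < \epsilon$. Using $h(v,u) + 2m = 2\sum_j \cosh([v-u]_j)$ together with the elementary bound $\cosh(x+y) \le \cosh(x)(\cosh(y) + \sinh(|y|))$, I get $h_{k+1}(v_{\sigma_{k+1}}) + 2m \le (h_k(v_{\sigma_{k+1}}) + 2m)e^{\epsilon}$, which combines with (ii) to yield a linear recursion $h_{k+1}(v_{\sigma_{k+1}}) \le \frac{e^\epsilon}{2} h_k(v_{\sigma_k}) + 2m(e^\epsilon - 1)$.

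For $\epsilon$ small enough the coefficient $\frac{e^\epsilon}{2} < 1$, so this recursion forces $\limsup_k h_k(v_{\sigma_k}) \le \frac{2m(e^\epsilon-1)}{1 - e^\epsilon/2}$, and since $\epsilon$ can be taken arbitrarily small this limsup is $0$, i.e. $h_k(v_{\sigma_k}) \to 0$. But then \Cref{lem:divBound} gives $\|d(v_{\sigma_k}, \mu_{\sigma_k})\|^2 \le h_k(v_{\sigma_k})(1 + \|d(v_{\sigma_k},\mu_{\sigma_k})\|) \to 0$, contradicting $\|d(v_{\sigma_k},\mu_{\sigma_k})\|_\infty \ge 1$. (One must be slightly careful that the recursion bound is only valid for $k$ past the threshold $N = N(\epsilon)$, but since a linear recursion with contraction factor $< 1$ started from any finite value still converges to the same fixed point, this causes no difficulty.) Hence the algorithm terminates, and at termination $\mu \le \mu_f$ and $\|d(v,\mu)\|_\infty \le 1$.

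Finally, for the feasibility and suboptimality claims: by \Cref{lem:feas}, setting $\lambda = \sqrt\mu(e^v + e^v\circ d)$ and $s = \sqrt\mu(e^{-v} - e^{-v}\circ d)$, the triple $(x, s, \lambda)$ with $x = x(v,\mu)$ is primal-dual feasible, so in particular $Ax + b = s \ge 0$. For suboptimality, weak duality for the QP gives $\frac12 x^T W x + c^T x - V^* \le \langle s, \lambda\rangle$ (the duality gap at a primal-dual feasible point), and \Cref{lem:feas} gives $\langle s, \lambda\rangle = \|s\circ\lambda\|_1 = \mu(m - \|d\|^2) \le \mu m$. Combining yields $\frac12 x^T W x + c^T x \le V^* + \mu m$, completing the proof. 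I expect the linear-recursion argument in the infinite-$\mu$-updates case — in particular the need to re-anchor the divergence at a moving center and absorb the resulting distortion into a contraction — to be the one genuinely delicate step; everything else is bookkeeping with results already established.
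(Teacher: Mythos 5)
Your proposal is correct and follows essentially the same route as the paper's own proof: the same two-case contradiction argument for termination, the same divergence recursion $h_{k+1}(v_{\sigma_{k+1}}) \le \frac{e^\epsilon}{2} h_{k}(v_{\sigma_{k}}) + 2m(e^\epsilon-1)$ built from \Cref{thm:newtonstep}, the $\cosh$ inequality, and continuity of $\mu \mapsto \hat v(\mu)$, with the final contradiction via \Cref{lem:divBound} and the guarantees via \Cref{lem:feas} and weak duality. No gaps to report.
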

\noindent We conclude with three topics related to selection of $\mu$.
\subsubsection{Computing the infimum}\label{sec:infComp}
Fix $v \in \mathbb{R}^m$ and let $d(\mu)$ denote $d(v, \mu)$.  
In this notation,
each iteration of {\tt longstep} requires computation of
$\inf \{ \mu > 0 : \|d(\mu)\|_{\infty} \le 1 \}$.
This is straight-forward upon recognition that $d(\mu)$ is an affine function
of $(\sqrt\mu)^{-1}$, i.e., it decomposes as $d(\mu) = d_0 + (\sqrt\mu)^{-1} d_1$
for some fixed $d_0\in \mathbb{R}^m$ and $d_1\in \mathbb{R}^m$.
We give a constructive proof of this fact that demonstrates how to
build this decomposition.
\begin{prop}\label{prop:ddecomp}
  There exists $d_0, d_1 \in \mathbb{R}^m$ satisfying
    $\newton(\mu) =  d_0  + \frac{1}{\sqrt \mu} d_1$ for all $\mu > 0$.
 \begin{proof}
 Fix $\mu_1 > 0$ and $\mu_2 > 0$ and $v \in \mathbb{R}^m$.
 Let $\hat d_i = \newton(v, \mu_i)$ and $k_i = \frac{1}{\sqrt\mu_i}$ for $i \in {1, 2}$.
   By Definition~\ref{defn:NewtonDirection}, there exists $\hat x_1$ and $\hat x_2$ satisfying
   \[
     A^T (e^v+ e^v \circ \hat d_1) = W \hat x_1 + k_1 c, \qquad A^T (e^v+ e^v \circ \hat d_2) = W \hat x_2 + k_2 c.
   \]
   Multiplying these equations by $t$ and $(1-t)$, respectively,
   and adding yields
   \[
     A^T (e^v+ e^v \circ (t \hat d_1 + (1-t) \hat d_2 )) = W (t \hat x_1  + (1-t)\hat x_2 ) + (t k_1 + (1-t) k_2) c.
   \]
   By similar argument,
   \[
      e^{-v} -  e^{-v} \circ  (t \hat d_1 + (1-t) \hat d_2 )   = A(t \hat x_1  + (1-t)\hat x_2 ) +  (t k_1 + (1-t) k_2)  b.
  \]
    By Definition~\ref{defn:NewtonDirection}, we conclude that $t \hat d_1 + (1-t) \hat d_2 = d(v, \mu)$ for
    $\frac{1}{\sqrt\mu} = t k_1 + (1-t) k_2$. Solving for $t$ shows that
     $t  = c_1 \frac{1}{\sqrt\mu} + c_0$ for $c_1 = (k_1-k_2)^{-1}$ and $c_0 = -k_2(k_1-k_2)^{-1}$.
   Substituting into $t \hat d_1 + (1-t) \hat d_2$, we deduce that
   \[
     d(v, \mu) = (c_1 \frac{1}{\sqrt\mu} + c_0) \hat d_1 + (1-c_1 \frac{1}{\sqrt\mu} - c_0) \hat d_2.
   \]
   Hence, the claim follows for $d_0 = c_0 \hat d_1 + (1-c_0) \hat d_2$
   and $d_1 = c_1 (\hat d_1 - \hat d_2)$.
  \end{proof}
\end{prop}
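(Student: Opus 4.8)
The plan is to exploit the closed-form characterization of $x(v, \mu)$ and $\newton(v, \mu)$ already established in \Cref{thm:NewtonChar}. That theorem shows that $x(v,\mu)$ is the unique solution of $(A^T Q(v) A + W)x = 2\sqrt\mu\, A^T e^v - (c + A^T Q(v) b)$, where the coefficient matrix $A^T Q(v) A + W$ is positive definite — hence invertible — and, crucially, depends only on $v$, not on $\mu$. Since the right-hand side is an affine function of $\sqrt\mu$, inverting the system immediately gives a decomposition $x(v, \mu) = x_0 + \sqrt\mu\, x_1$ for the fixed vectors $x_1 = 2(A^T Q(v) A + W)^{-1} A^T e^v$ and $x_0 = -(A^T Q(v) A + W)^{-1}(c + A^T Q(v) b)$.

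First I would record this decomposition of $x(v, \mu)$, then substitute it into the identity $\newton(v, \mu) = \ones - \tfrac{1}{\sqrt\mu}\, e^v \circ (A x(v,\mu) + b)$, also from \Cref{thm:NewtonChar}. Expanding, $\tfrac{1}{\sqrt\mu}\, e^v \circ \bigl(A(x_0 + \sqrt\mu\, x_1) + b\bigr) = e^v \circ (A x_1) + \tfrac{1}{\sqrt\mu}\, e^v \circ (A x_0 + b)$, so the $\sqrt\mu$-dependence collapses to a constant term plus a $\tfrac{1}{\sqrt\mu}$ term. This yields the claim with $d_0 = \ones - e^v \circ (A x_1)$ and $d_1 = -\,e^v \circ (A x_0 + b)$, both manifestly independent of $\mu$.

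An alternative, inversion-free route — closer in spirit to the ``constructive'' phrasing — is to observe that $(\newton(v,\mu), x(v,\mu))$ jointly satisfies the linear system of \Cref{defn:NewtonDirection}, whose dependence on the scalar $\tfrac{1}{\sqrt\mu}$ becomes affine after dividing through by $\sqrt\mu$; evaluating at two distinct parameters $\mu_1, \mu_2$ and forming the affine combination whose parameter value equals any target $\tfrac{1}{\sqrt\mu}$ then produces $\newton(v,\mu)$ by uniqueness, and reading off the coefficients expresses $d_0$ and $d_1$ explicitly in terms of $\newton(v,\mu_1)$ and $\newton(v,\mu_2)$. Either way, there is no genuine obstacle here: the only point requiring care is confirming that the ``coefficient'' object (the matrix $A^T Q(v) A + W$, equivalently the left-hand operator of the Newton system) is $\mu$-free and invertible, which \Cref{thm:NewtonChar} already supplies; everything else is routine bookkeeping, and the mild subtlety in the second approach is simply ensuring $\mu_1 \neq \mu_2$ so that the interpolation parameter map $\mu \mapsto t$ is well-defined.
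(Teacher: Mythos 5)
Your proposal is correct, but your primary route differs from the paper's. You work directly from the explicit characterization in \Cref{thm:NewtonChar}: since the coefficient matrix $A^T Q(v)A + W$ is $\mu$-free and invertible while the right-hand side $2\sqrt\mu\,A^T e^v - (c + A^T Q(v)b)$ is affine in $\sqrt\mu$, you get $x(v,\mu) = x_0 + \sqrt\mu\,x_1$ and then push this through $d(v,\mu) = \ones - \tfrac{1}{\sqrt\mu}\,e^v \circ (Ax(v,\mu)+b)$ to obtain closed-form expressions for $d_0$ and $d_1$; the algebra checks out. The paper instead proves the claim by interpolation: it rescales the Newton system of \Cref{defn:NewtonDirection} by $1/\sqrt\mu$ (so that $\hat x_i = x(v,\mu_i)/\sqrt{\mu_i}$ appears), evaluates at two parameters $\mu_1 \neq \mu_2$, forms the affine combination, and invokes uniqueness of the Newton direction to conclude that $d(v,\mu)$ is the corresponding affine combination of $\newton(v,\mu_1)$ and $\newton(v,\mu_2)$ — essentially your sketched ``alternative, inversion-free route,'' including the caveat that $\mu_1 \neq \mu_2$ is needed so that $c_1 = (k_1-k_2)^{-1}$ makes sense. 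The trade-off: your version yields explicit formulas for $d_0, d_1$ via two solves with $A^T Q(v)A + W$, whereas the paper's constructive two-point version expresses $d_0, d_1$ directly in terms of two already-computed Newton directions, which is exactly how {\tt longstep} builds the decomposition in practice and underlies the paper's subsequent remark on reusing a single Cholesky factorization of $W + A^T Q(v)A$. Either argument fully establishes the proposition.
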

This decomposition in turn allows us to  characterize the condition
$\|d(\mu)\|_{\infty} \le 1$ using a system of linear inequalities
immediate from the definition of $\|\cdot\|_{\infty}$.
  \begin{prop}~\label{prop:mustar}
    For $d_0, d_1 \in \mathbb{R}^m$, 
    we have that $\|d_0 + \frac{1}{\sqrt\mu} d_1\|_{\infty} \le 1$ if and only if
    \begin{align}\label{eq:mustar}
      -\ones \le d_0 +  \frac{1}{\sqrt\mu} d_1 \le \ones ,
  \end{align}
    where $\ones \in \mathbb{R}^m$ denotes the vector of all ones.
  \end{prop}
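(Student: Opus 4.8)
The plan is to prove this by reducing the claim about the infinity norm to a collection of scalar inequalities, one per coordinate, each of which translates verbatim into the two-sided vector inequality \eqref{eq:mustar}. This is essentially a restatement of the definition of the $\ell_\infty$ norm combined with the elementary fact that $|a| \le 1 \iff -1 \le a \le 1$ for a real number $a$, applied entrywise.

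Concretely, first I would recall that for any vector $w \in \mathbb{R}^m$ we have $\|w\|_\infty = \max_{i \in \{1,\dots,m\}} |w_i|$, so that $\|w\|_\infty \le 1$ holds if and only if $|w_i| \le 1$ for every $i \in \{1,\dots,m\}$. Next I would apply this with $w = d_0 + \frac{1}{\sqrt\mu} d_1$, whose $i$-th entry is $[d_0]_i + \frac{1}{\sqrt\mu}[d_1]_i$. Then for each fixed $i$, the scalar condition $\left|[d_0]_i + \frac{1}{\sqrt\mu}[d_1]_i\right| \le 1$ is equivalent to the pair of inequalities $-1 \le [d_0]_i + \frac{1}{\sqrt\mu}[d_1]_i \le 1$. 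Finally I would observe that requiring this pair to hold simultaneously for all $i$ is exactly the componentwise statement $-\ones \le d_0 + \frac{1}{\sqrt\mu} d_1 \le \ones$, which is \eqref{eq:mustar}, completing the equivalence.

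There is essentially no obstacle here: the proof is a one-line unwinding of definitions, and the only thing to be careful about is being explicit that the vector inequalities in \eqref{eq:mustar} are to be interpreted entrywise and that the quantifier over coordinates distributes correctly over the conjunction of the two one-sided bounds. I would keep the write-up to two or three sentences rather than belabor it.

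\begin{proof}
  By definition of the $\ell_\infty$ norm, $\|d_0 + \frac{1}{\sqrt\mu} d_1\|_{\infty} \le 1$ holds
  if and only if $\bigl|[d_0]_i + \frac{1}{\sqrt\mu}[d_1]_i\bigr| \le 1$ for every index $i \in \{1,\dots,m\}$.
  For each fixed $i$, this scalar inequality is in turn equivalent to the pair of bounds
  $-1 \le [d_0]_i + \frac{1}{\sqrt\mu}[d_1]_i \le 1$. Requiring these bounds to hold simultaneously for
  all $i$ is precisely the entrywise vector inequality $-\ones \le d_0 + \frac{1}{\sqrt\mu} d_1 \le \ones$,
  which is~\eqref{eq:mustar}.
\end{proof}
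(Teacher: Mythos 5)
Your proof is correct and is exactly the argument the paper has in mind: the paper states the proposition without proof, calling it ``immediate from the definition of $\|\cdot\|_{\infty}$,'' and your entrywise unwinding of that definition is precisely that one-line justification.
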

\noindent Note that minimizing $\mu$ subject to these inequalities can be done
in $\bigO(m)$ time simply by iterating over the components of $d_0, d_1 \in \mathbb{R}^m$.
\subsubsection{Reuse of factorizations}
The constructive proof of \Cref{prop:ddecomp}  
builds the decomposition $d_0 + (\sqrt\mu)^{-1} d_1$
from two Newton directions $d(v, \mu_1)$ and $d(v, \mu_2)$.  Since $v$ is \emph{fixed}, 
these directions are found by solving 
two Newton systems (\Cref{defn:NewtonDirection}) with the \emph{same} 
positive definite coefficient matrix $W + A^T Q(v) A$.  
Hence, one can find both directions using the same Cholesky factorization
of $W + A^T Q(v) A$.
\subsubsection{Least-squares $\mu$}\label{sec:least_squares}
The decomposition $d(\mu) = d_0 + (\sqrt\mu)^{-1} d_1$
from \Cref{prop:ddecomp}
also enables easy computation of the least-squares $\mu$, i.e., the $\mu$ 
that minimizes $\|d(\mu)\|^2$. 
Assuming $d^T_0 d_1 < 0$, this $\mu$ is the 
solution of  $-d^T_0 d_1  \sqrt\mu =\|d_1\|^2$
   and provides a natural heuristic choice for the initial $\mu_0$ passed to {\tt longstep}.

\section{Comparison with barrier methods}\label{sec:barrier}
The log-domain update $v \leftarrow v + d(v, \mu)$ 
induces  multiplicative updates 
$s \leftarrow s \circ e^{-d}$ and $\lambda \leftarrow  \lambda \circ e^{d}$  of the slack
variable $s:= \sqrt\mu e^{-v}$ and the Lagrange multiplier $\lambda := \sqrt\mu e^{v}$.
Taylor expanding $e^{-d}$ and $e^{d}$ yields approximations
of these updates:
\begin{align}~\label{eq:FirstOrderApprox}
 s \circ e^{-d}\approx s \circ (\ones-d), \quad \lambda \circ e^{d}  \approx \lambda \circ (\ones + d).
\end{align}
In this section, we interpret these approximations in the context of \emph{barrier methods}.
Specifically, we show that $s \leftarrow s \circ (\ones-d)$ is 
equivalent to an iteration of the \emph{primal barrier method}
applied to the QP~\eqref{qp:main}. Similarly, we show that
$\lambda  \leftarrow \lambda \circ (\ones + d)$ is 
equivalent to an iteration of the \emph{dual barrier method} applied
to the dual QP~\cite{dorn1960duality}, which takes the form
\begin{align*}
\maximize_{u, \lambda} & -(\frac{1}{2} u^T W u  +b^T \lambda) \;\; \mbox{subject to }  A^T\lambda = Wu + c, \lambda \ge 0.
\end{align*}
Computational experiments illustrate that replacing the log-domain
update with either of these approximations increases
the number of {\tt longstep} iterations needed to solve random QPs
to  fixed accuracy,
illustrating in effect that barrier methods are less efficient than
log-domain IPMs.

\subsection{Primal barrier method}
For $\mu > 0$, the primal barrier method applies Newton's method to the
optimality conditions of
\begin{align*}
\minimize_{x,s} \;\; & \frac{1}{2}x^T W x + c^{T} x - \mu \sum^m_{i=1} \log s_i \;\;  \mbox{subject to }  s = Ax + b.
\end{align*}
These conditions read $A^T \lambda = Wx + c$, $s=Ax + b$ and $\mu s^{-1} = \lambda$,
where $\lambda$ is a Lagrange multiplier for the equality constraints. 
Letting $z:=(x, s, \lambda)$, Newton iterations take the form $z \leftarrow z + \Delta z$,
where $\Delta z := (\Delta x, \Delta s, \Delta \lambda)$
solves
\begin{align}\label{eq:PrimalBarrierIter}
\begin{aligned}
A^T(\lambda + \Delta \lambda) = W (x + \Delta x) + c, \;\; s + \Delta s = A(x+\Delta x) + b,\\ 
\mu (s^{-1} - s^{-2} \circ \Delta s) = \lambda + \Delta \lambda.
\end{aligned}
\end{align}
The following shows that the Newton update of $s$ is precisely equivalent
to the primal linearization~\eqref{eq:FirstOrderApprox} of the log-domain Newton step,
i.e., it is equivalent to replacing $s \circ e^{-d}$ with its first-order approximation $s \circ e^{-d} \approx s \circ (\ones - d)$.
\begin{prop}\label{prop:PrimalBarrier}
Consider $(x, s, \lambda) \in \mathbb{R}^n \times \mathbb{R}^m \times \mathbb{R}^m$ with $s > 0$.
Let $(\Delta x, \Delta s, \Delta \lambda)$ solve~\eqref{eq:PrimalBarrierIter}.
Finally, let $v \in \mathbb{R}^m$ and $\mu > 0$ satisfy $s = \sqrt\mu e^{-v}$.
Then $s + \Delta s = s \circ(\ones - d(v, \mu))$, where $d(v, \mu)$ is the log-domain
Newton direction (\Cref{defn:NewtonDirection}).
\begin{proof}
Letting $\hat x = x + \Delta x$, the conditions~\eqref{eq:PrimalBarrierIter} simplify to
\[
\mu A^T ( s^{-1} - s^{-2} \circ \Delta s)= W\hat x + c, \;\; s + \Delta s =A\hat x + b.
\]
Letting $\hat d := -s^{-1} \circ \Delta s$, we conclude that
\[
 \mu A^T  [s^{-1}\circ (\ones  + \hat d )]= W\hat x + c,  \;\; s \circ( \ones  - \hat d)  =Ax + b.
\]
Substituting  $s^{-1} = \sqrt\mu^{-1} e^{v}$ and $s= \sqrt\mu e^{-v}$ yields
\[
\sqrt\mu A^T[e^v \circ (\ones  + \hat d )] = W \hat x + c, \;\;  
\sqrt\mu e^{-v} \circ( \ones  - \hat d)  =A \hat x + b,
\]
which is precisely the definition of $d(v, \mu)$.
Hence, $d(v, \mu) = \hat d$ since $d(v, \mu)$ is unique by~\Cref{thm:NewtonChar}.
By definition of $\hat d$, we also have that $\Delta s = -s \circ \hat d$, proving
the claim.
\end{proof}
\end{prop}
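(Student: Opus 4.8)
The plan is to eliminate the auxiliary increments $\Delta\lambda$ and $\Delta x$ from the primal-barrier Newton system~\eqref{eq:PrimalBarrierIter} and then recognize what remains as a rescaled copy of the system defining the log-domain Newton direction in \Cref{defn:NewtonDirection}. Throughout I would write $\hat x := x + \Delta x$.

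First I would use the third equation of~\eqref{eq:PrimalBarrierIter}, namely $\mu(s^{-1} - s^{-2}\circ\Delta s) = \lambda + \Delta\lambda$, to substitute for $\lambda + \Delta\lambda$ in the first equation; together with the second equation this collapses~\eqref{eq:PrimalBarrierIter} to
\[
\mu A^T\!\big(s^{-1} - s^{-2}\circ\Delta s\big) = W\hat x + c, \qquad s + \Delta s = A\hat x + b.
\]
Next I would set $\hat d := -s^{-1}\circ\Delta s$, so that $s^{-1} - s^{-2}\circ\Delta s = s^{-1}\circ(\ones + \hat d)$ and $s + \Delta s = s\circ(\ones - \hat d)$, and substitute the hypothesis $s = \sqrt\mu\,e^{-v}$ (hence $s^{-1} = \tfrac{1}{\sqrt\mu}e^{v}$). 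This rewrites the pair above as
\[
\sqrt\mu\, A^T\!\big(e^{v}\circ(\ones + \hat d)\big) = W\hat x + c, \qquad \sqrt\mu\, e^{-v}\circ(\ones - \hat d) = A\hat x + b,
\]
which is exactly the system of \Cref{defn:NewtonDirection} defining $d(v,\mu)$ and $x(v,\mu)$, with $\hat x$ playing the role of $x(v,\mu)$ and $\hat d$ that of $d(v,\mu)$.

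To finish, I would invoke \Cref{thm:NewtonChar}: since $x(v,\mu)$ is the unique solution of a positive-definite linear system and $d(v,\mu)$ is a function of $x(v,\mu)$, the log-domain Newton direction is unique, so $\hat d = d(v,\mu)$ (and incidentally $\hat x = x(v,\mu)$). Hence $s + \Delta s = s\circ(\ones - \hat d) = s\circ(\ones - d(v,\mu))$, as claimed. I do not anticipate a genuine obstacle: the argument is a change of variables followed by an appeal to uniqueness. The only points requiring care are the order of elimination — one must use the third equation of~\eqref{eq:PrimalBarrierIter} to remove $\Delta\lambda$ before touching the first — and noting that no solvability question arises, since existence of $(\Delta x,\Delta s,\Delta\lambda)$ is assumed in the hypothesis (and in any case follows from positive definiteness of $W + A^T Q(v)A$ via \Cref{thm:NewtonChar}).
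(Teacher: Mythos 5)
Your proposal is correct and follows essentially the same route as the paper: eliminate $\Delta\lambda$ via the third equation of~\eqref{eq:PrimalBarrierIter}, set $\hat d = -s^{-1}\circ\Delta s$, substitute $s = \sqrt\mu\, e^{-v}$ to recover the defining system of \Cref{defn:NewtonDirection}, and conclude $\hat d = d(v,\mu)$ by the uniqueness in \Cref{thm:NewtonChar}. No gaps to report.
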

\noindent We next show an analogous interpretation holds for the
dual linearization $\lambda \circ e^d \approx \lambda \circ (\ones + d)$.
\subsection{Dual barrier method}
Given $\mu > 0$, the dual barrier method applies Newton's method to the
optimality conditions of
\begin{align*}
\minimize_{u, \lambda} \;\;& \frac{1}{2} u^T W u + b^T \lambda   - \mu \log \lambda \;\;  \mbox{  subject to }  A^T \lambda = Wu + c.
\end{align*}
These optimality conditions read
 $Wu =  W \gamma$, $A^T \lambda = Wu + c$ and $A \gamma + b = \mu \lambda^{-1}$,
 where $\gamma$ is a Lagrange multiplier for the equality constraints.
Letting $z :=(\gamma, u, \lambda)$, 
Newton iterations take the form $z \leftarrow z + \Delta z$,
where $\Delta z := (\Delta \gamma, \Delta u, \Delta \lambda)$
solves 
\begin{align}\label{eq:dualBarrierDef}
\begin{aligned}
W(u+\Delta u) = W (\gamma + \Delta \gamma), \;\;
A^T (\lambda + \Delta \lambda) = W(u+\Delta u) + c  \\
A (\gamma + \Delta \gamma) + b = \mu( \lambda^{-1} - \lambda^{-2} \circ \Delta \lambda).
\end{aligned}
\end{align}
The following shows that the Newton update of $\lambda$ is precisely equivalent
to the dual linearization~\eqref{eq:FirstOrderApprox} of the log-domain Newton step, i.e.,
it is equivalent to replacing $\lambda \circ e^d$ with its first-order approximation $\lambda \circ e^d \approx \lambda \circ (\ones + d)$.
\begin{prop}\label{prop:DualBarrier}
Consider $(\gamma, u, \lambda) \in \mathbb{R}^n \times \mathbb{R}^n \times \mathbb{R}^m$ with $\lambda > 0$.
Let $\Delta z := (\Delta \gamma, \Delta u, \Delta \lambda)$ solve~\eqref{eq:dualBarrierDef}.
Finally, let $v \in \mathbb{R}^m$ and $\mu > 0$ satisfy $\lambda = \sqrt\mu e^{v}$.
Then $\lambda + \Delta \lambda = \lambda \circ (\ones +  d(v, \mu))$, where $d(v, \mu)$ is the 
log-domain Newton direction (\Cref{defn:NewtonDirection}).
\begin{proof}
Letting $\hat \gamma = \gamma + \Delta \gamma$, the conditions~\eqref{eq:dualBarrierDef}
simplify to
\begin{align*}
A \hat \gamma + b = \mu (\lambda^{-1} - \lambda^{-2} \circ \Delta \lambda), \;\;  A^T (\lambda + \Delta \lambda) = W\hat \gamma + c.
\end{align*}
Substituting $\lambda = \sqrt\mu e^v$
and letting $\hat d:= \lambda^{-1} \circ \Delta \lambda$,
we obtain
\[
A \hat \gamma + b = \sqrt\mu e^{-v}\circ(\ones - \hat d), \qquad   \sqrt\mu A^T [e^v\circ(\ones + \hat d)] = W\hat \gamma + c,
\]
which is precisely the definition of $d(v, \mu)$.
Hence, $d(v, \mu) = \hat d$ since $d(v, \mu)$ is unique by~\Cref{thm:NewtonChar}.
By definition of $\hat d$, we have that $ \Delta \lambda = \lambda  \circ \hat d$, proving
the claim.
\end{proof}
\end{prop}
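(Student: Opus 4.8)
The plan is to reduce the dual barrier Newton system \eqref{eq:dualBarrierDef} --- after discarding the pieces of information the log-domain direction does not use --- to the \emph{verbatim} defining system of $d(v,\mu)$ from \Cref{defn:NewtonDirection}, and then conclude by uniqueness.

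First I would eliminate $\Delta u$. The first block of \eqref{eq:dualBarrierDef} asserts $W(u+\Delta u) = W(\gamma+\Delta\gamma)$, so writing $\hat\gamma := \gamma + \Delta\gamma$, every occurrence of $W(u+\Delta u)$ may be replaced by $W\hat\gamma$; note that $\Delta u$ itself need not be uniquely pinned down --- only $W\hat\gamma$ enters the remaining equations. This collapses \eqref{eq:dualBarrierDef} to the two relations $A^T(\lambda+\Delta\lambda) = W\hat\gamma + c$ and $A\hat\gamma + b = \mu(\lambda^{-1} - \lambda^{-2}\circ\Delta\lambda)$.

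Next I would change variables from $\Delta\lambda$ to $\hat d := \lambda^{-1}\circ\Delta\lambda$, so that $\lambda+\Delta\lambda = \lambda\circ(\ones+\hat d)$ and $\mu(\lambda^{-1} - \lambda^{-2}\circ\Delta\lambda) = \mu\,\lambda^{-1}\circ(\ones-\hat d)$. Substituting $\lambda = \sqrt\mu\, e^v$, and hence $\mu\,\lambda^{-1} = \sqrt\mu\, e^{-v}$, turns the two relations into $\sqrt\mu\, A^T[e^v\circ(\ones+\hat d)] = W\hat\gamma + c$ and $\sqrt\mu\, e^{-v}\circ(\ones-\hat d) = A\hat\gamma + b$. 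These are exactly the equations of \Cref{defn:NewtonDirection} defining $d(v,\mu)$ and its associate $x(v,\mu)$, with $\hat\gamma$ playing the role of $x(v,\mu)$. Since \Cref{thm:NewtonChar} shows $x(v,\mu)$ is the unique solution of a positive-definite linear system --- and $d(v,\mu)$ is then determined as a function of it --- we obtain $\hat d = d(v,\mu)$, whence $\lambda + \Delta\lambda = \lambda\circ(\ones + \hat d) = \lambda\circ(\ones + d(v,\mu))$, as claimed.

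I do not expect a genuine obstacle: the argument is purely algebraic bookkeeping. The only point that needs care is the elimination of $\Delta u$ --- one must notice that the log-domain direction constrains $W\hat\gamma$ only, not $\hat\gamma$ or $\Delta u$ individually, which is consistent with the dual QP's objective depending on $u$ solely through $Wu$. The primal counterpart is \Cref{prop:PrimalBarrier}, and the two proofs are structurally identical.
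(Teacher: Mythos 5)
Your proposal is correct and follows essentially the same route as the paper's own proof: eliminate $\Delta u$ via $W(u+\Delta u)=W\hat\gamma$, set $\hat d=\lambda^{-1}\circ\Delta\lambda$, substitute $\lambda=\sqrt\mu e^{v}$ to recover the defining equations of \Cref{defn:NewtonDirection}, and conclude by the uniqueness in \Cref{thm:NewtonChar}. Your extra remark that only $W\hat\gamma$ (not $\Delta u$ or $\hat\gamma$ individually) is constrained is a fair clarification of a step the paper leaves implicit, but it is the same argument.
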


\subsection{Computational comparison}
We next give simple computational experiments that compare
barrier methods with {\tt longstep}. These experiments
show that barrier methods require more iterations to reach a target
duality gap when initialized at identical starting points. Code for reproducing
these experiments is located at 
\[
{\tt https://github.com/frankpermenter/LDIPMComputationalResults}
\]
\paragraph{Barrier method implementations}

We invoke \Cref{prop:PrimalBarrier}
and implement the primal barrier method by modifying a single
line of {\tt longstep}. Precisely, we replace 
$v \leftarrow v + \alpha^{-1}d$ 
with the approximation 
$v \leftarrow -\log [e^{-v}\circ (\ones-\alpha^{-1} d)]$,
which is equivalent to taking $s \leftarrow s + \alpha^{-1}\Delta s$,
with $(s, \Delta s)$ as defined in \Cref{prop:PrimalBarrier}.
We similarly implement the dual barrier method
using \Cref{prop:DualBarrier}.  That is, we replace 
$v \leftarrow v + \alpha^{-1}d$ 
with $v \leftarrow\log [e^{v}\circ (\ones+\alpha^{-1} d)]$,
which is equivalent to taking $\lambda \leftarrow \lambda +
\alpha^{-1}\Delta \lambda$, with $(\lambda, \Delta \lambda)$ as defined in \Cref{prop:DualBarrier}.
We also slightly modify selection of $\mu$, replacing the $\|d(v, \mu)\|_{\infty} \le 1$
bound with $\|d(v, \mu)\|_{\infty} \le 1- \epsilon$:
since $\alpha \ge 1$, this ensures that the argument to the $\log$ function is always positive.

\paragraph{Instances}
Our comparison uses randomly generated QPs that satisfy the regularity
conditions of Assumption~\ref{ass:main}.  The inequality matrix $A$ 
has entries drawn from a normal distribution with zero mean
and unit variance. Each row is then rescaled to have unit norm.
The cost matrix $W$ is constructed as $W = R^T R$ with $R \in \mathbb{R}^{r \times n}$
sampled and normalized the same way as $A$. Finally $c$ and $b$ are chosen
by sampling $x$, $\lambda > 0$ and $s > 0$  and setting $b = s - Ax$,  $c = A^{T}\lambda  - Wx$.
The vector $x$ is drawn from a normal distribution.  The vector $s$  is chosen
as $\ones + \frac{1}{10} |w|$, where $w$ is 
also sampled from a normal distribution. Finally, $\lambda$ is chosen the same way as $s$
using a different random $w$.

\paragraph{Results}
\Cref{tab:results} shows superior performance of {\tt longstep} on
a set of instances. In this set, we vary either the rank of $W \in \mathbb{R}^{n \times n}$ or the number of inequalities $m$, i.e.,
the number of rows of $A \in \mathbb{R}^{m \times n}$. The rank of $W$ is controlled by the
number of columns $r$ of $R \in \mathbb{R}^{r \times n}$, recalling that  $W = R^T R$.
 The algorithms are initialized with $v_0 = 0$,   $\mu_f = 10^{-3}$,
 and $\mu_0 = \mu_{ls}$, where $\mu_{ls}$ denotes the least-squares $\mu$ (\Cref{sec:least_squares}).

\begin{table}
  \centering
  \small{}
  \begin{tabular}{rrrrr}
  && \multicolumn{3}{c}{Iterations}\\
    $m$   &  $\rank W$ & LS & DB & PB  \\
    \toprule
 200 &   0 &   8.9 & 9.5 & 9.5 \\
 200 &  50 &   7.1 & 8.3 & 7.6 \\
 200 & 100 &   6.5 & 7.9 & 7.1 \\
\midrule
 100 &  50 &   6.3 & 7.1 & 7.3 \\
 150 &  50 &   6.8 & 7.7 & 7.6 \\
 200 &  50 &   7.1 & 8.3 & 7.6 \\

  \end{tabular}
  \qquad
  \begin{tabular}{rrrrr}
  && \multicolumn{3}{c}{Iterations}\\
    $m$   &  $\rank W$ & LS & DB & PB  \\
    \toprule
 2000 &   0 &   10.8 & 11.6 & 11.7 \\
 2000 & 500 &   8.1 & 9.8 & 8.8 \\
 2000 & 1000 &   7.5 & 9.0 & 8.0 \\
\midrule
 1000 & 500 &   7.3 & 8.0 & 8.7 \\
 1500 & 500 &   7.9 & 9.0 & 8.8 \\
 2000 & 500 &   8.1 & 9.8 & 8.8 \\
  \end{tabular}
  \caption{Average iterations on 30 random QPs with $m$ inequalities in
  $n=100$ variables (left) and $n=1000$ variables (right). Iterations needed to
  reach a target duality-gap are shown for {\tt longstep} (LS), the
  dual barrier method (DB), and the primal barrier method (PB).
   }\label{tab:results}
\end{table}


{\small
\bibliographystyle{abbrvnat}
\bibliography{bib}
}
\end{document}